\newtheorem{theorem}{Theorem}
\newtheorem{thm}{Theorem}
\newtheorem{lemma}{Lemma}
\newtheorem{prop}{Proposition}
\newtheorem{prob}{Problem}
\newtheorem{con}{Conjecture}
\newtheorem{rem}{Remark}
\numberwithin{equation}{section}
\newcommand{\abs}[1]{\left\vert#1\right\vert}
\newcommand{\C}{\mbox{$\mathbb{C}$}}
\newcommand{\D}{\mbox{$\mathbb{D}$}}
\date{\today}
\begin{document}
\setcounter{page}{1}

\title[Harmonic $K$-quasiconformal Koebe functions]
{Harmonic $K$-quasiconformal Koebe functions: construction and application to Pavlovi\'c's problem
}

\author[Zhi-Gang Wang, Xiao-Yuan Wang, Antti Rasila, and Jia-Le Qiu
]{Zhi-Gang Wang, Xiao-Yuan Wang, Antti Rasila, and Jia-Le Qiu
}

\vskip.20in
\address{\noindent Zhi-Gang Wang\vskip.05in
School of Mathematics and Statistics, Hunan
	First Normal University, Changsha 410205, Hunan, P. R. China.}

 
\vskip.05in
\email{\textcolor[rgb]{0.00,0.00,0.84}{wangmath$@$163.com}}

\vskip.05in
\address{\noindent Xiao-Yuan Wang \vskip.05in
	School of Mathematical Sciences, Liaocheng University, Liaocheng
	252059, Shandong, P. R. 
	China.}

\vskip.05in
\email{\textcolor[rgb]{0.00,0.00,0.84}{mewangxiaoyuan$@$163.com}}
 
\vskip.05in
\address{\noindent Antti Rasila \vskip.05in
	Department of Mathematics with Computer Science, Guangdong Technion-Israel Institute
of Technology, 241 Daxue Road, Shantou 515063, Guangdong, P. R.
	China.}

\address{\noindent
Department of Mathematics, Technion-Israel Institute of Technology, Haifa 3200003, Israel.}
\vskip.05in
\email{\textcolor[rgb]{0.00,0.00,0.84}{antti.rasila$@$iki.fi}; \textcolor[rgb]{0.00,0.00,0.84}{antti.rasila$@$gtiit.edu.cn}}

\vskip.05in
	\address{\noindent Jia-Le Qiu \vskip.03in
		School of Mathematics and Statistics, Changsha University of Science and Technology,
		Changsha 410114, Hunan, P. R. China.}
  \vskip.05in
	\email{\textcolor[rgb]{0.00,0.00,0.84}{qiujiale2023$@$163.com}}



\subjclass[2020]{31A05, 30C55, 30C62, 30H10.}

\keywords{Harmonic $K$-quasiconformal Koebe function; harmonic $K$-quasiconformal mapping; harmonic Hardy space; harmonic Schwarzian derivative.}

\begin{abstract} 

We first construct the harmonic $K$-quasiconformal Koebe functions, filling a long-standing foundational gap in geometric function theory. This construction provides a unified parametric candidate extremal function framework for conformal mappings, quasiconformal mappings, and harmonic mappings, and we formulate related conjectures for the extremal theory of harmonic $K$-quasiconformal mappings. By combining this construction with Astala and Koskela's $H^p$-theory for quasiconformal mappings, we establish a sharp result concerning the optimal order of harmonic $K$-quasiconformal mappings with bounded Schwarzian norm in harmonic Hardy spaces. Motivated by the work of Chuaqui, Hernández, and Mart\'in’s [Math. Ann. 367, 1099–1122 (2017)], this result gives a partial solution to Pavlovi\'c's  2014 open problem on the embeddings of harmonic quasiconformal mappings into Hardy spaces, and outlines a path toward its complete solution.
\end{abstract} \maketitle


\section{Introduction and statement of the main result}\label{s1} 

Following the seminal work of Clunie and Sheil-Small \cite{cs}, univalent harmonic mappings have emerged as a central topic at the interface of complex analysis, differential geometry, geometric topology, Teichm\"{u}ller theory, nonlinear dynamics, and numerical analysis. Despite remarkable progress over recent decades, several fundamental problems remain strikingly open. Most notably, sharp analogues of Bieberbach's conjecture (de Branges's theorem \cite{de}) and the Koebe one-quarter covering theorem, cornerstones of classical conformal mapping theory, still lack definitive solutions in the harmonic quasiconformal setting. Within geometric function theory, a long-standing bottleneck persists: there exists no effective framework for constructing and rigorously establishing Koebe-type conjectures for harmonic $K$-quasiconformal (\textbf{HQC}) mappings. This critical gap has severely hampered the systematic development of extremal theory and canonical example construction in the field. 

Linear properties, the maximum modulus principle, qualitative results in quasiconformal geometry and the like extend trivially to the setting of harmonic quasiconformal geometric function theory; however, the essence of classical geometric function theory, centered on extremal functions, coefficient conjectures, sharp distortion inequalities and Loewner theory, cannot be extended trivially at all, and a completely new theoretical framework must be established. This is precisely the core value and research significance of harmonic quasiconformal geometric function theory.



Let $\mathcal{H}$ denote the class of complex-valued harmonic functions
$f=h+\overline{g}$ in the open unit disk $\D$, normalized by the conditions $$f(0)=f_{z}(0)-1=0,$$ which have the
form
\begin{equation}\label{111}
f(z)=z+\sum_{n=2}^{\infty}a_nz^n+\overline{\sum_{n=1}^{\infty}b_nz^n}.
\end{equation}

Denote by $\mathcal{A}$ the class of normalized analytic functions, which is a subclass of
$\mathcal{H}$ with $g(z)\equiv 0$.
Let $\mathcal{S_{\mathcal{H}}}$ be the class of harmonic functions $f\in\mathcal{H}$
that are univalent and sense-preserving in $\D$. Moreover, we denote by $\mathcal{S}^{0}_{\mathcal{H}}$ the subclass of $\mathcal{S_{\mathcal{H}}}$ with the additional condition $f_{\overline{z}}(0)=0$. Since there exist reciprocal transformations between the classes $\mathcal{S_{\mathcal{H}}}$
and $\mathcal{S}^{0}_{\mathcal{H}}$, we usually focus our attention on the class $\mathcal{S}^{0}_{\mathcal{H}}$.  Clearly, the classical class $\mathcal{S}$ of normalized univalent analytic functions is a subclass of the class
$\mathcal{S}_\mathcal{H}^0$. 

\subsection{Harmonic quasiconformal mappings}
We say that $f$ belongs to the class $\mathcal{S}_\mathcal{H}(K)$ of \textbf{HQC} mappings, where $K\ge 1$ is a constant, if $f\in\mathcal{S}_\mathcal{H}$ and its dilatation defined by $$\omega:=\frac{g'}{h'}$$ satisfies the condition 
$\abs{\omega}\leq k$, where $k\in [0,1)$ is given by
$$
k:=\frac{K-1}{K+1}\quad(K\geq 1).
$$ A function $f$ is called a harmonic quasiconformal mapping, if it belongs to $\mathcal{S}_\mathcal{H}(K)$ for some $K\ge 1$.

For convenience, we define $$\mathcal{S}^0_\mathcal{H}(K):=\mathcal{S}_\mathcal{H}(K)\cap\mathcal{S}^0_\mathcal{H}.$$
Note that $$\mathcal{S}^0_\mathcal{H}(1)=\mathcal{S}_\mathcal{H}(1)=\mathcal{S}.$$

In recent years,
Kalaj \cite{ka} obtained the extended form of Lindel\"{o}f theorem for harmonic quasiconformal mappings.
Kalaj \cite{ka2} established sharp Riesz-type inequalities for harmonic mappings on the open unit disk in the complex plane.
Wang, Shi, and Jiang \cite{wsj} investigated harmonic quasiconformal mappings associated with asymmetric vertical strips. 
Sun, Rasila, and Jiang \cite{srj} considered linear combinations of harmonic quasiconformal mappings convex
in one direction.
Chuaqui, Hern\'{a}ndez, and Mart\'{i}n \cite{chm}
studied the affine and linear invariant families of
harmonic mappings, and obtained the sharp order of harmonic mappings with bounded Schwarzian norm.
Partyka, Sakan, and Zhu \cite{psz}  provided various properties of harmonic quasiconformal mappings with the
convex holomorphic part.
Liu and Zhu \cite{lz} studied the Riesz conjugate functions theorem for harmonic quasiconformal
mappings. 
Todor\v{c}evi\'{c}'s monograph \cite{t} gives recent developments on higher-dimensional harmonic quasiconformal mappings and hyperbolic type metrics.



\subsection{Hardy spaces for harmonic and quasiconformal mappings}
Let $0<p \leq \infty$. A complex-valued function $f$ in $\D$ is said to be in the {\it Hardy space} for $p$, if the integral mean
 \[M_p(r,f):= \begin{cases}
\left(\frac{1}{2\pi}\int_0^{2\pi}\abs{f(re^{i\theta})}^pd\theta\right)^{\frac{1}{p}}&(0<p<\infty),\\ \ \
\sup\limits_{{\abs{z}=r}}\abs{f(z)}&(p=\infty),
\end{cases}\]
is defined for all $r\in(0,1)$, and
$$\sup\limits_{0<r<1}M_p(r, f)<\infty.$$ 

We record the following special cases of this definition. The classical Hardy space of analytic functions is denoted by $H^p$ 
and the one of $K$-quasiconformal mappings \cite{ak} is denoted by $H^p_K$. Similarly, we say that a harmonic function $f$ is in a \textit{harmonic Hardy space} $h^p$, if it is harmonic and satisfies the above condition. We denote the space of \textbf{HQC} mappings by $h^p_K$. 

\subsection{The order of a Hardy space} 

For a class $\mathcal{G}$ of complex-valued functions in $\D$, we define its {\it order} as a constant $\gamma\geq 0$ such that $\mathcal{G}$ is contained the corresponding Hardy space $H^p$ for all $p\in(0,\gamma)$.

We recall the following special case of the result due to Astala and Koskela \cite[Theorem 3.2]{ak}.

\vskip.10in

\begin{thm} \label{thmB}
 If $f$ is a $K$-quasiconformal mapping in $\D$, then $f\in H^p_K$ for $$0<p<\frac{1}{2K}\quad(K\geq1),$$ and the constant $1/(2K)$ is sharp.
 \end{thm}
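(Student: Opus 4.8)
I would prove the two halves of Theorem~\ref{thmB} separately, after reducing to the normalization $f(0)=0$ (legitimate since $H^p_K$ is stable under postcomposition by a translation); write $M_\infty(r,f)=\sup_{|z|=r}|f(z)|$ and $M_p^p(r,f)=\frac1{2\pi}\int_0^{2\pi}|f(re^{i\theta})|^p\,d\theta$.

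\emph{The inclusion $f\in H^p_K$ for $p<1/(2K)$.} The plan is to transplant the classical proof that $\mathcal{S}\subset H^p$ for $p<1/2$, whose two ingredients are a radial growth bound and a Prawitz-type inequality. For the growth bound I would establish the quasiconformal Koebe distortion estimate
\[
M_\infty(r,f)\le C(K,f)\,(1-r)^{-2K-\varepsilon}\qquad(\tfrac12\le r<1)
\]
for every $\varepsilon>0$, by a conformal-modulus argument: the Gr\"{o}tzsch ring $\mathbb{D}\setminus[0,z]$ has modulus comparable to $\bigl(\log\tfrac1{1-|z|}\bigr)^{-1}$ as $|z|\to1$; its image under $f$ has modulus at least $K^{-1}$ times that; and, by Teichm\"{u}ller's modulus estimate, a ring separating the continuum $f([0,z])\ni 0,f(z)$ from $\partial f(\mathbb{D})$ has modulus at most $\sim\bigl(\log(|f(z)|/d)\bigr)^{-1}$ with $d:=\operatorname{dist}(0,\partial f(\mathbb{D}))$; comparing these bounds and inserting the precise asymptotics of the Gr\"{o}tzsch and Teichm\"{u}ller modulus functions yields $\log(|f(z)|/d)\le(2K+o(1))\log\tfrac1{1-|z|}$. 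For the second ingredient I would prove the Prawitz-type inequality
\[
M_p^p(r,f)\le C_K\,p\int_0^r M_\infty(t,f)^p\,\frac{dt}{t}\qquad(0<r<1),
\]
which in the analytic case follows at once from the Hardy--Stein identity together with the inclusion $f(\mathbb{D}_r)\subset\{|w|<M_\infty(r,f)\}$, and which in the quasiconformal case I would recover either from the fact that $|f|^p$ is a nonnegative subsolution of the divergence-form elliptic equation attached to the Beltrami coefficient of $f$, or via the factorization $f=F_0\circ\chi$ furnished by the measurable Riemann mapping theorem, with $F_0$ univalent analytic (normalizable into $\mathcal{S}$) and $\chi$ a $K$-quasiconformal self-map of $\mathbb{D}$ fixing $0$, combined with H\"{o}lder distortion estimates for $\chi$ and $\chi^{-1}$. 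Feeding the growth bound (and the H\"{o}lder estimate $M_\infty(t,f)\lesssim t^{1/K}$ near $t=0$) into the Prawitz inequality gives
\[
M_p^p(r,f)\le C\int_0^r\frac{t^{\,p/K-1}}{(1-t)^{(2K+\varepsilon)p}}\,dt,
\]
which is bounded uniformly in $r\in(0,1)$ as soon as $(2K+\varepsilon)p<1$; letting $\varepsilon\downarrow0$ yields $f\in H^p_K$ for all $p<1/(2K)$.

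\emph{Optimality of $1/(2K)$.} Here the plan is to display an explicit extremal ``$K$-quasiconformal Koebe function''. Let $\sigma_K(w)=|w|^{K-1}w$ be the radial stretching, a $K$-quasiconformal self-map of $\mathbb{C}\setminus\{0\}$, and set
\[
F_K(z)=\sigma_K\!\left(\Bigl(\frac{1+z}{1-z}\Bigr)^{2}\right).
\]
Since $z\mapsto\bigl(\frac{1+z}{1-z}\bigr)^2$ maps $\mathbb{D}$ conformally onto $\mathbb{C}\setminus(-\infty,0]$, on which $\sigma_K$ acts as a $K$-quasiconformal self-map, the composite $F_K$ is $K$-quasiconformal on $\mathbb{D}$, with $|F_K(z)|=\bigl|\frac{1+z}{1-z}\bigr|^{2K}$. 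A direct evaluation of
\[
M_p^p(r,F_K)=\frac1{2\pi}\int_0^{2\pi}\Bigl(\frac{1+2r\cos\theta+r^2}{1-2r\cos\theta+r^2}\Bigr)^{Kp}\,d\theta
\]
via the substitution $\theta=(1-r)u$ shows that its main part, from $\theta$ near $0$, is of order $(1-r)^{1-2Kp}$ when $2Kp>1$ and of order $\log\tfrac1{1-r}$ when $2Kp=1$; hence $\sup_{0<r<1}M_p^p(r,F_K)=\infty$ whenever $p\ge1/(2K)$, while it is finite for $p<1/(2K)$ (consistently with the inclusion above). Thus $F_K\notin H^p_K$ for $p\ge1/(2K)$, so the exponent $1/(2K)$ cannot be enlarged.

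\emph{The main obstacle.} I expect the Prawitz-type inequality for genuinely non-analytic quasiconformal maps to be the crux: the classical proof rests on the subharmonicity of $|f|^p$, which is lost, so it must be replaced either by the elliptic interpretation of $|f|^p$ or by the factorization $f=F_0\circ\chi$, where the real work is to control how the quasiconformal self-map $\chi$ distorts the circles $\{|z|=r\}$ and to transfer the ``large only on a small arc'' feature of $|F_0|$ through $\chi$. The second delicate point is securing the exponent $2K$ \emph{exactly} (not a cruder $C(K)$) in the growth estimate, since that is precisely the rate the counterexample $F_K$ forces.
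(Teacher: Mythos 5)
First, note what you are being compared against: the paper does not prove Theorem~\ref{thmB} at all --- it is quoted verbatim as a special case of Astala and Koskela \cite[Theorem 3.2]{ak}. So your attempt is an independent reconstruction of a cited result, and it should be judged on its own completeness.

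Your sharpness half is correct and essentially complete: $F_K=\sigma_K\bigl(((1+z)/(1-z))^2\bigr)$ is $K$-quasiconformal (a conformal map onto the slit plane followed by the radial stretch, which is $K$-quasiconformal there), $|F_K(z)|=|(1+z)/(1-z)|^{2K}$, and the integral-mean computation showing divergence exactly for $p\ge 1/(2K)$ is right; translation-invariance of $H^p$ membership takes care of normalization. The growth estimate in the inclusion half is also sound: the asymptotics of the Gr\"otzsch and Teichm\"uller modulus functions do produce the exponent $2K+o(1)$, and in fact the factorization $f=F_0\circ\chi$ together with the $1/K$-H\"older estimate for $\chi^{-1}$ gives the cleaner bound $M_\infty(r,f)\le C(1-r)^{-2K}$ directly.

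The genuine gap is the Prawitz-type inequality, which you correctly identify as the crux but do not prove --- and it is precisely where the substance of Astala--Koskela's theorem lives, so leaving it as ``to be recovered'' leaves the inclusion half unproved. Neither of your two suggested routes closes it as stated. For the elliptic route: $\log|f|$ does solve a divergence-form equation with ellipticity constant $K$, and $|f|^p$ is then an $\mathcal{A}$-subsolution, but subsolutions of such operators obey mean-value and monotonicity inequalities only with respect to the measure attached to the operator (its own elliptic measure), not with respect to $d\theta$ on circles; the Hardy--Stein identity $r\frac{d}{dr}M_p^p(r)=\frac{p^2}{2\pi}\iint_{|z|<r}|f|^{p-2}|f'|^2\,dA$ is genuinely a consequence of analyticity, so nothing here ``follows at once,'' and even granted a substitute one still needs the univalent change of variables step in a weak-regularity setting. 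For the factorization route: after writing $f=F_0\circ\chi$, the whole problem is compressed into a uniform-in-$r$ distributional estimate such as $\sup_{r}\int_0^{2\pi}\bigl(1-|\chi(re^{i\theta})|\bigr)^{-2p}\,d\theta<\infty$ for $2pK<1$, or equivalently into transferring the nontangential maximal function of $F_0$ through $\chi$ via quantitative distortion of circles, Whitney boxes and boundary quasisymmetry at the sharp exponent $1/K$; this is exactly the hard quasiconformal input and you only name it. In short: the architecture (growth bound plus Prawitz-type bound) is a viable skeleton and the extremal example is correct, but the key lemma carrying the exponent $1/(2K)$ in the positive direction is missing, so the inclusion part of the theorem is not yet proved by your outline.
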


\vskip.10in

In 1990, Abu-Muhanna and Lyzzaik \cite[Theorem 2]{al} established the following result showing that for a harmonic mapping $f=h+\overline{g}$, the functions $h$ and $g$ belong to the Hardy space $H^p$, and $f$ belongs to harmonic Hardy spaces $h^p$, respectively, for a suitable choice of $p$.

\vskip.10in

\begin{thm} \label{thmC}
 If $f=h+\overline{g}\in \mathcal{S}_\mathcal{H}$, then $h,\, g\in H^p$ and $f\in h^p$ for every $p$ with $$0<p<\frac{1}{(2\beta+2)^{2}},$$
where $$\beta:=\sup\limits_{f\in\mathcal{S}_\mathcal{H}}\abs{a_2}.$$
\end{thm}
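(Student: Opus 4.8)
The plan is to reduce the assertion to classical facts about \emph{analytic} univalent functions, by combining a growth theorem for $\mathcal{S}_\mathcal{H}$ with a Prawitz-type integral-means inequality that exploits the global univalence of $f$. The indispensable structural input is the theorem of Clunie and Sheil-Small that $\beta<\infty$; without the finiteness of $\beta$ the class $\mathcal{S}_\mathcal{H}$ is far too large for any such estimate.

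First I would establish a growth (and distortion) theorem for $\mathcal{S}_\mathcal{H}$. Since $\mathcal{S}_\mathcal{H}$ is both affinely and linearly invariant, the affine-invariant Koebe transform $F_a$ of $f$ — obtained by precomposing $f$ with the disk automorphism carrying $0$ to $a$ and then applying the normalizing affine map — again lies in $\mathcal{S}_\mathcal{H}$, so the modulus of its second Taylor coefficient is at most $\beta$. Writing that coefficient out in terms of $h'(a),h''(a),g'(a),g''(a)$ produces a differential inequality for $\log|h'|$ (and for $|g'|/|h'|$) along each radius; integrating from the origin yields pointwise bounds of the shape $|h'(z)|+|g'(z)|\le C_\beta(1-|z|)^{-(2\beta+3)}$ and hence $M_\infty(r,h)+M_\infty(r,g)\le C_\beta(1-r)^{-(2\beta+2)}$, with the same bound for $M_\infty(r,f)$. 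The exponent $2\beta+2$, rather than $2\beta$, is exactly the cost of the two normalizations built into the Koebe transform — the Jacobian factor $1-|a|^2$ of the automorphism, and the affine term needed to restore $F_{\bar z}(0)=0$.

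The growth bound alone does not suffice, because $h$ and $g$ need not be univalent even though $f$ is; the second step must convert growth into Hardy-space membership. For an \emph{analytic} univalent map this is exactly Prawitz's inequality $M_p^p(r,F)\le p\int_0^r M_\infty^p(\rho,F)\,\rho^{-1}\,d\rho$, whose right side converges iff $p$ times the growth exponent is less than $1$; here one needs a version valid for a harmonic univalent $f$, together with a mechanism for passing from $f$ to its analytic pieces $h,g$. One concrete route: $f$ being a homeomorphism of $\D$, its image $\Omega$ is simply connected, so with $\psi\colon\D\to\Omega$ the Riemann map (hence $\psi\in H^q$ for every $q<\tfrac{1}{2}$, being univalent) and $\chi:=\psi^{-1}\circ f\colon\D\to\D$ the resulting sense-preserving homeomorphism fixing $0$, one estimates $M_p(r,f)=M_p(r,\psi\circ\chi)$ by combining the Koebe distortion estimates for $\psi$ with the growth bound for $f$; the membership of $h$ and $g$ then follows from $|h|\le|f|+|g|$, $|g(z)|\le\int_{[0,z]}|h'(\zeta)|\,|d\zeta|$ and the distortion bound established above, and $f\in h^p$ from $|f|^p\le|h|^p+|g|^p$. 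Running the bookkeeping with the exponent $2\beta+2$ from the first step entering twice produces the stated range $p<1/(2\beta+2)^2$.

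The main obstacle is this second step. The classical Prawitz argument needs the function whose integral means are estimated to be univalent, which none of $h$, $g$, or the auxiliary map $\chi$ is — $\chi$ is merely a homeomorphism, not analytic, not harmonic, and, for general $\mathcal{S}_\mathcal{H}$, not even quasiconformal — so one must replace univalence of the estimated function by univalence of $f$ alone and be careful enough with the comparison that the exponent comes out as $(2\beta+2)^2$ rather than something larger. The a.e.\ existence of radial limits of $f$, needed to identify the boundary function, is a further point that has to be drawn from univalence plus the growth bound rather than assumed. The exponent $1/(2\beta+2)^2$ produced this way is plainly far from sharp — the distortion bound, itself non-sharp, is invoked twice — which fits its role here as motivation rather than as a definitive statement.
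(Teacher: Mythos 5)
First, a point of reference: the paper does not prove this statement at all --- Theorem~\ref{thmC} is imported verbatim from Abu-Muhanna and Lyzzaik \cite[Theorem 2]{al}, so there is no in-paper argument to compare with, and your attempt has to be judged against that source. Your first step is sound in spirit and is indeed how such arguments begin: $\mathcal{S}_\mathcal{H}$ is an affine and linear invariant family whose order $\beta$ is finite by Clunie--Sheil-Small, and integrating the coefficient bound for the Koebe/affine transforms yields distortion and growth estimates with exponents depending only on $\beta$ (though the specific exponents you quote, e.g.\ $2\beta+3$ for $|h'|+|g'|$, are asserted rather than derived).

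The genuine gap is the second step, and you name it yourself but do not close it. You correctly observe that a growth bound alone does not place a non-univalent analytic function in $H^p$; yet your route to $g\in H^p$ is precisely a growth bound ($|g|\le\int|h'|$), your route to $h\in H^p$ is $|h|\le|f|+|g|$, which presupposes both $g\in H^p$ and $f\in h^p$, and $f\in h^p$ is to come from estimating $M_p(r,\psi\circ\chi)$ where $\chi=\psi^{-1}\circ f$ is merely a homeomorphism --- no Prawitz or Hardy--Littlewood type integral-means inequality applies to such a composition, and Koebe distortion for $\psi$ combined with the sup-norm growth of $f$ returns only sup-norm information, not the decay of $p$-means you need. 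So the chain is circular and the decisive mechanism is missing. The way univalence of $f$ is actually brought to bear on the analytic parts in the literature is through an area/length--area device, e.g.\ $\iint_{\D_r}\left(|h'|^2-|g'|^2\right)dA=\mathrm{Area}\,f(\D_r)\le\pi M_\infty(r,f)^2$, which converts the growth of $f$ into genuine integral estimates for $h'$ and $g'$; an ingredient of this kind (it underlies the Abu-Muhanna--Lyzzaik and Nowak arguments and the mean-valency refinement of Das and Sairam Kaliraj cited in the paper) is absent from your sketch. Finally, the assertion that ``running the bookkeeping'' produces exactly the range $p<1/(2\beta+2)^2$ is not carried out, and since producing that exponent is the entire content of the statement, the proposal as written is an outline rather than a proof.
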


In 1996, the order $$\frac{1}{(2\beta+2)^{2}}$$ in Theorem \ref{thmC} was improved to $1/\beta^{2}$ by Nowak in \cite[Theorem 1]{n}, where she conjectured that the order is actually $1/\beta$. Moreover, she obtained the sharp results $$f\in h^p\quad\left(0<p<\frac{1}{2}\right)$$ for the class $\mathcal{K}_\mathcal{H}$ of convex harmonic mappings, and 
$$f\in h^p\quad\left(0<p<\frac{1}{3}\right)$$ for the class $\mathcal{C}_\mathcal{H}$ of close-to-convex harmonic mappings. Even today, proving  Nowak's conjecture is seen as a 
\textit{challenging} problem.

We also note that Aleman and Mart\'{\i}n \cite{am} proved that convex harmonic mappings are not necessarily in $h^{1/2}$, which provided a negative answer
to a question raised by Duren \cite[p. 153, Sect. 8.5]{d}. Laitila, Nieminen, Saksman, and Tylli \cite{ln} considered the rigidity of composition operators involving harmonic Hardy spaces $h^p$. Melentijevi\'{c} and Bo\v{z}in \cite{mb} obtained sharp Riesz-Fej\'{e}r inequality for harmonic Hardy spaces $h^p$.

An analytic function $h$ in $\D$ has valency $n$ if $h$ takes no value
more than $n$ times. More generally, for a function $h$ analytic in $\D$, let $W(R)$ denote
the area (regions covered multiply being counted multiply) of the image of $\D$ under
$h$ that lies in the closed disk $\abs{w}\leq R$. If $W(R)\leq n\pi R^2$ for all $R>0$, where $n$ is a positive number, then we say that $h$ has mean valency $n$ (cf. \cite{ds}).

In 2024, Das and Sairam Kaliraj \cite[Theorem 1]{ds}
proved that Nowak's conjecture is true with the additional condition that $h^\prime$ has a \textit{finite mean valency}. 
Noting that the conjectured $\beta$ of the class $\mathcal{S}_\mathcal{H}$ is equal to $3$ (see \cite{cs}), and it is consequently conjectured by Duren \cite[p. 152, Sect. 8.5]{d} that $$\mathcal{S}_\mathcal{H}\subset h^{{1}/{3}}.$$

\subsection{Affine and linear invariant families}
For $f=h+\overline{g}\in\mathcal{S}_\mathcal{H}$, following the terminology of \cite{s}, a family is said to be an \textit{affine
and linear invariant family} ({AL} family) if it is closed under both the \textit{Koebe transform}
$$
K_{\zeta}(f)(z):=\frac{f\left((z+\zeta)/\left(1+\overline{\zeta} z\right)\right)-f(\zeta)}{\left(1-\abs{\zeta}^2\right)h'(\zeta)}
\quad(\abs{\zeta}<1)
$$
and \textit{affine changes}
$$A_{\varepsilon}(f)(z):=\frac{f(z)-\overline{\varepsilon f(z)}}{1-\overline{\varepsilon}g'(0)}
\quad(\abs{\varepsilon}<1).$$

In 1990, Sheil-Small \cite{s} 
offered an in-depth study of affine and linear invariant families $\mathcal{F}$ of
harmonic mappings $f=h+\overline{g}$ in $\D$. The {\it order} of an AL family is given by
$$\alpha(\mathcal{F}):=\sup\limits_
{f\in\mathcal{F}}
\abs{a_2(f)}=\frac{1}{2}
\sup\limits_
{f\in\mathcal{F}}
\abs{h''(0)}.$$
Indeed, many important characterizations of the AL family $\mathcal{F}$ are determined
by its {\it order}. Clearly, the class $\mathcal{S_{\mathcal{H}}}$ is a typical example of the affine and linear invariant families.

\subsection{Harmonic pre-Schwarzian and Schwarzian derivatives}
The classical pre-Schwarzian and Schwarzian derivatives of locally univalent analytic functions $h$ (cf. \cite{du}),
which are, respectively, defined by
$$P_h(z):=\frac{h''(z)}{h'(z)}$$
and
$$S_h(z):=\left(\frac{h''(z)}{h'(z)}\right)'-\frac{1}{2}\left(\frac{h''(z)}{h'(z)}\right)^2.$$


For a locally univalent sense-preserving harmonic mapping $f=h+\overline{g}$ in $\D$, Hern\'{a}ndez and Mart\'{i}n \cite{hm} introduced
the pre-Schwarzian and Schwarzian derivatives of $f$ as follows:
$$P_f:=\left(\log J_f\right)_z=P_h-\frac{\omega'\,\overline{\omega}}{1-\abs{\omega}^2}$$ and
\begin{align*}\begin{split}S_f&:=\left(\log J_f\right)_{zz}-\frac{1}{2}\left[\left(\log J_f\right)_z\right]^2\\&\, \, =S_h+\frac{\overline{\omega}}{1-\abs{\omega}^2}\left(\frac{h''}{h'}\omega'-\omega''\right)
-\frac{3}{2}\left(\frac{\omega'\,\overline{\omega}}{1-\abs{\omega}^2}\right)^2,\end{split}\end{align*}
where $$J_f:=\abs{f_z}^2-\abs{f_{\overline{z}}}^2$$ is the Jacobian of $f$.
The corresponding pre-Schwarzian norm $\|P_f\|$ and Schwarzian norm
$\|S_f\|$ of $f$ are defined, respectively, by
$$\|P_f\|:=\sup\limits_
{z\in\mathbb{D}}
\abs{P_f(z)}\left(1-\abs{z}^2\right)$$
and
$$\|S_f\|:=\sup\limits_
{z\in\mathbb{D}}
\abs{S_f(z)}\left(1-\abs{z}^2\right)^2.$$

We note that Chuaqui, Duren, and Osgood \cite{cdo} have given an alternative definition of harmonic Schwarzian derivative
with restriction on its dilatation.  

\subsection{A problem of Pavlovi\'{c}}
Because univalent harmonic mappings are not necessarily \textit{quasiconformal}, the constant $1/(2K)$ in Theorem \ref{thmB} may not be sharp for the class of harmonic $K$-quasiconformal mappings. Based on Theorems \ref{thmB} and \ref{thmC}, Pavlovi\'{c} \cite[p. 315, Problem 10.2]{pav} in 2014 proposed the following natural problem.

\begin{prob}\label{p1}
{Find the sharp $$p_0=p_0(K)$$ such that every harmonic $K$-quasiconformal mapping belongs to $h^p_K$ for $p<p_0$.}
\end{prob}

\begin{rem}\label{r02}
{\rm One of keys to solve the above problem 
is determination of $\sup\limits_{f\in\mathcal{S}_\mathcal{H}(K)}\abs{a_2}$ of the family $\mathcal{S}_\mathcal{H}(K)$.
But, at present, it seems that this is a \textit{hard} problem (see \cite[Proposition 1.6]{cp} and 
Conjecture \ref{conj1} in Sect. \ref{s3}
).}
\end{rem}

Curiously, Das, Huang, and Rasila \cite[Theorem 3]{dhr} established the optimal orders for convex and close-to-convex harmonic $K$-quasiconformal mappings to belong to harmonic Hardy spaces, respectively; notably, these results are independent of the quasiconformal constant $K$, i.e.,
$$f\in h^p\quad\left(0<p<1\right)$$ for the class $\mathcal{K}_\mathcal{H}(K)$ of 
convex harmonic quasiconformal mappings, and 
$$f\in h^p\quad\left(0<p<\frac{1}{2}\right)$$ for the class $\mathcal{C}_\mathcal{H}(K)$ of 
close-to-convex harmonic quasiconformal mappings. 

Note that Ponnusamy, Qiao, and Wang \cite[Theorem 4.2]{pqw} also obtained a sharp result regarding uniformly locally univalent harmonic quasiconformal mappings belong to harmonic Hardy space.

 Denote by  $\widehat{\mathcal{S}_\mathcal{H}}(K)$ the subclass of $\mathcal{S}_\mathcal{H}(K)$, 
where $h'$ has a \textit{finite mean valency}. We now give the following characteristic of the class $\widehat{\mathcal{S}_\mathcal{H}}(K)$, which will play a crucial role in our study.

\begin{prop}\label{prop1}
\textit{Suppose that $f=h+\overline{g}\in\widehat{\mathcal{S}_\mathcal{H}}(K)$. If $$\phi(K):=\sup\limits_{f\in\widehat{\mathcal{S}_\mathcal{H}}(K)}\abs{a_2}.$$
Then \begin{enumerate}
\item for $\phi(K)\leq 2K$, we have $$f\in h^p\quad \left(0<p<\frac{1}{2K}\right);$$
\vskip.05in
\item for $\phi(K)>2K$, we have $$f\in h^p\quad \left(0<p<\frac{1}{\phi(K)}\right).$$
\end{enumerate}
Both of these bounds are sharp.}
\end{prop}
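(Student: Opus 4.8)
The plan is to reduce Proposition~\ref{prop1} to the combination of Astala--Koskela's Theorem~\ref{thmB} applied to the (quasiconformal) analytic factor $h$, the growth theory for linear invariant families of Sheil-Small, and the Das--Sairam Kaliraj resolution of Nowak's conjecture under the finite mean valency hypothesis. Write $f=h+\overline{g}\in\widehat{\mathcal{S}_\mathcal{H}}(K)$. First I would record that since $f$ is an \textbf{HQC} mapping with dilatation $\omega=g'/h'$ satisfying $|\omega|\le k<1$, the analytic part $h$ is itself locally univalent; moreover $h$ is globally univalent (a standard consequence of the shear-type representation for sense-preserving harmonic mappings together with $|\omega|\le k$, e.g. via the fact that $h+\overline{g}$ univalent and $\omega$ an honest contraction forces $h$ injective), and in fact $h$ is a $K$-quasiconformal self-map-type analytic function in the sense needed for Theorem~\ref{thmB}: because $f$ is $K$-quasiconformal and $g=\overline{\partial}$-part, $h$ agrees with $f$ up to an antiholomorphic perturbation controlled by $k$, so $h$ is $K'$-quasiconformal with $K'\le K$ — more cleanly, one shows directly that $h\in H^p$ for $0<p<1/(2K)$ by noting that $h$ is obtained from $f$ via a quasiconformal deformation of the same constant. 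Since $g'=\omega h'$ with $\|\omega\|_\infty\le k<1$, one then transfers the same integrability to $g$ (the standard Abu-Muhanna--Lyzzaik / Nowak argument: $M_p(r,g)\le M_p(r,h)$ up to constants when $g$ is a shear of $h$), and hence $f=h+\overline g\in h^p$ for $0<p<1/(2K)$ unconditionally.

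Next, for the upper range governed by $\phi(K)$: here the point is the growth estimate coming from the AL-family structure. The class $\widehat{\mathcal{S}_\mathcal{H}}(K)$ is an affine and linear invariant family of order $\alpha = \phi(K)$ in Sheil-Small's sense (it is closed under the Koebe transform $K_\zeta$ and the affine changes $A_\varepsilon$, and its order is exactly $\sup|a_2|=\phi(K)$). For such a family the sharp growth bound gives $|h(z)|\lesssim (1-|z|)^{-\phi(K)+1}$-type control, and then by the Das--Sairam Kaliraj theorem (Nowak's conjecture under finite mean valency, which applies precisely because membership in $\widehat{\mathcal{S}_\mathcal{H}}(K)$ forces $h'$ to have finite mean valency) one obtains $h\in H^p$, and hence $f\in h^p$, for $0<p<1/\alpha = 1/\phi(K)$. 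Taking the better of the two exponents $1/(2K)$ and $1/\phi(K)$ yields the two cases in the statement: when $\phi(K)\le 2K$ the quasiconformal bound $1/(2K)$ is the stronger (larger) one, and when $\phi(K)>2K$ the linear-invariance bound $1/\phi(K)$ wins.

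For sharpness I would exhibit, in each regime, a mapping in $\widehat{\mathcal{S}_\mathcal{H}}(K)$ realizing the claimed boundary exponent. In case~(1), $\phi(K)\le 2K$: the sharpness of $1/(2K)$ is inherited from the sharpness clause in Theorem~\ref{thmB} — the extremal $K$-quasiconformal mapping there can be taken analytic (hence harmonic with $g\equiv0$) and of finite mean valency, so it lies in $\widehat{\mathcal{S}_\mathcal{H}}(K)$ and fails to be in $h^p$ for $p>1/(2K)$. In case~(2), $\phi(K)>2K$: here one uses the candidate extremal function for the AL family of order $\phi(K)$, i.e. a suitably sheared generalized Koebe function (this is exactly the harmonic $K$-quasiconformal Koebe function whose construction is the first contribution of the paper); its second coefficient attains $\phi(K)$, its analytic part has finite mean valency by design, and its boundary growth is of exact order $(1-|z|)^{-\phi(K)+1}$, which places it outside $h^p$ for $p>1/\phi(K)$.

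The main obstacle I anticipate is the first bullet's claim that the quasiconformal integrability exponent $1/(2K)$ survives the passage $f\mapsto h$ and then $h\mapsto f=h+\overline g$ with \emph{no} loss and with no appeal to $\phi(K)$: one must argue carefully that the analytic part $h$ of an \textbf{HQC} mapping is itself $K$-quasiconformal (or at least lies in $H^p$ for the full range $0<p<1/(2K)$), and that adding back $\overline g$ with $\|\omega\|_\infty\le k$ does not shrink the integrability range — the clean way is a Minkowski-type estimate $M_p(r,f)\le M_p(r,h)+M_p(r,g)$ for $p\ge 1$ (with the usual $|a+b|^p\le |a|^p+|b|^p$ substitute for $0<p<1$) combined with $M_p(r,g)\le M_p(r,h)$ via the shear relation, so that $f$ inherits $h$'s Hardy exponent exactly. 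Verifying that $h$ genuinely qualifies for Theorem~\ref{thmB} (quasiconformality of the analytic part, not merely of $f$) is the delicate point and is where I would spend the most care.
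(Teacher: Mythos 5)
Your route to the range $0<p<1/(2K)$ has a genuine flaw. You propose to apply Theorem \ref{thmB} to the analytic part $h$, asserting that $h$ is ``$K'$-quasiconformal with $K'\le K$'' and worrying, correctly, that this is the delicate point. It is in fact false in the sense you need: an analytic function that is a quasiconformal homeomorphism is conformal, so Astala--Koskela applied to $h$ (granting even its univalence) yields the exponent $1/2$, not $1/(2K)$, and the phrase ``quasiconformal deformation of the same constant'' is not an argument. The correct and much simpler move --- and the one the paper makes --- is to apply Theorem \ref{thmB} directly to $f$ itself: every $f\in\widehat{\mathcal{S}_\mathcal{H}}(K)$ is a $K$-quasiconformal mapping of $\D$, so $f\in h^p$ for $0<p<1/(2K)$ with no decomposition into $h$ and $g$ and no transfer estimates $M_p(r,g)\lesssim M_p(r,h)$. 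Your second ingredient, running the Das--Sairam Kaliraj argument on the AL-family of order $\phi(K)$ to get $f\in h^p$ for $0<p<1/\phi(K)$ (with finite mean valency of $h'$ available because it is built into the definition of $\widehat{\mathcal{S}_\mathcal{H}}(K)$, not ``forced'' by it), does match the paper's proof. But your synthesis is internally inconsistent: you say you take the \emph{larger} of the two exponents, yet when $\phi(K)\le 2K$ the larger exponent is $1/\phi(K)$, not $1/(2K)$; the case assignment in Proposition \ref{prop1} (and in the paper's proof, which explicitly takes $\min\{1/\phi(K),1/(2K)\}$) corresponds to the minimum, so your stated justification contradicts the conclusion you assert.

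The sharpness discussion also does not hold up. In case (1) you claim the Astala--Koskela extremal ``can be taken analytic (hence harmonic with $g\equiv0$)''; for $K>1$ this is impossible, since an analytic $K$-quasiconformal map is conformal and conformal maps lie in $H^p$ for all $p<1/2$. Whether the extremal for the constant $1/(2K)$ can be realized within the harmonic class is precisely the content of Pavlovi\'c's Problem \ref{p1} and cannot be assumed. In case (2) your extremal argument presupposes that the harmonic $K$-quasiconformal Koebe function attains $\sup|a_2|=\phi(K)$ and has boundary growth of exact order $(1-|z|)^{-\phi(K)+1}$; in the paper these are only conjectures (Conjecture \ref{conj01}), not established facts. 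To be fair, the paper's own proof of Proposition \ref{prop1} offers no sharpness argument beyond assertion, but your proposal replaces that silence with claims that are demonstrably false or conjectural, so the sharpness part remains unproved in your write-up as well.
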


\begin{proof}
By the fact that $\phi(K)$ is a real function with respect to $K$,
$\phi(K)\geq1$ for $K\geq1$ (see \cite{p}), using the similar method as in  Das and Sairam Kaliraj \cite[Theorem 1]{ds}, 
we confirm that if $f\in\widehat{\mathcal{S}_\mathcal{H}}(K)$, 
then $f\in h^p$ for $$0<p<\frac{1}{\phi(K)}.$$
Note that, by Theorem \ref{thmB} and taking the value of $$\min\limits_{K\geq 1}\left\{\frac{1}{\phi(K)},\ \frac{1}{2K}\right\}.$$ We obtain the assertion of
Proposition \ref{prop1}.
\end{proof}

For convenience, we denote the subclass of ${\mathcal{S}_\mathcal{H}}(K)$ (resp. $\widehat{\mathcal{S}_\mathcal{H}}(K)$) with
\begin{equation*}
 \|S_f\|\leq \lambda
\end{equation*}
by ${\mathcal{S}_\mathcal{H}}(K,\lambda)$ (resp. $\widehat{\mathcal{S}_\mathcal{H}}(K,\lambda)$), where $K\geq1$ and $\lambda\geq0$. 
Now, we establish the following relationship between
the family $\widehat{\mathcal{S}_\mathcal{H}}(K,\lambda)$ of \textbf{HQC} mappings with bounded Schwarzian norm and harmonic Hardy spaces $h^p$.

\begin{theorem}\label{t1}
Let $f\in\widehat{\mathcal{S}_\mathcal{H}}(K,\lambda)$ with $K\geq1$ and $\lambda\geq0$. 
Then
\begin{enumerate}
\item for $0\leq\lambda\leq6$ and $K\geq1$, we have $$f\in h^p\quad \left(0<p<\frac{1}{2K}\right);$$
\vskip.05in
\item for $\lambda>6$ and $K\geq K_1>1$, we have $$f\in h^p\quad \left(0<p<\frac{1}{2K}\right);$$
\vskip.05in
\item for $\lambda>6$ and $1\leq K<K_1$, we have $$f\in h^p\quad \left(0<p<\frac{1}{\varphi(K,\lambda)}\right),$$
\end{enumerate}
where
\begin{equation}\label{1}\varphi(K,\lambda):=\sup\limits_{f\in\widehat{\mathcal{S}_\mathcal{H}}(K,\lambda)}\abs{a_2}
=\sqrt{1+\frac{\lambda}{2}+\frac{1}{2}\left(\frac{K-1}{K+1}\right)^2}+\frac{K-1}{2\left({K+1}\right)},\end{equation}
and 
$K_1$ is the unique solution in $(1,+\infty)$ of the equation
\begin{equation}\label{2}16K^4+24K^3-(2\lambda-11)K^2-2(2\lambda-1)K-2\lambda-5=0.\end{equation}
All of these results are sharp.
\end{theorem}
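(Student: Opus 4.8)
The plan is to reduce Theorem \ref{t1} to Proposition \ref{prop1} by computing the exact value of $\phi(K)$ for the restricted class $\widehat{\mathcal{S}_\mathcal{H}}(K,\lambda)$, which amounts to establishing the displayed formula \eqref{1} for $\varphi(K,\lambda)$. First I would recall the relationship between the second coefficient of $f=h+\overline{g}$ and its harmonic Schwarzian derivative. Writing $h(z)=z+a_2z^2+\cdots$, $g(z)=b_1z+b_2z^2+\cdots$, with $b_1=\omega(0)$, and expanding $S_f$ at the origin, the quantity $S_f(0)$ is controlled by $\|S_f\|\le\lambda$ via $|S_f(0)|\le\lambda$. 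From the formula for $S_f$ in terms of $S_h$, $\omega$, $\omega'$, $\omega''$ and $h''/h'$, evaluated at $z=0$, one extracts an inequality linking $|a_2|$, $|b_1|=|\omega(0)|\le k=(K-1)/(K+1)$, and the lower-order Taylor data of $\omega$. The extremal configuration should be the one where $|\omega(0)|=k$, the dilatation is (up to rotation) the constant $k$ or a disk automorphism achieving the bound, and $S_h$ contributes its maximal admissible value subject to $\|S_f\|\le\lambda$; optimizing over these free parameters yields precisely the closed form $\varphi(K,\lambda)=\sqrt{1+\lambda/2+\tfrac12 k^2}+k/2$. I would present this optimization carefully, treating the affine-invariance of the class to normalize and the Koebe transform to reduce to the behavior at $0$.

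Once \eqref{1} is in hand, the three-case structure of the theorem is just the case analysis of Proposition \ref{prop1} with $\phi(K)$ replaced by $\varphi(K,\lambda)$: one compares $\varphi(K,\lambda)$ with $2K$. For fixed $\lambda$, the function $K\mapsto\varphi(K,\lambda)-2K$ is continuous; at $K=1$ it equals $\sqrt{1+\lambda/2}-2$, which is $\le 0$ exactly when $\lambda\le 6$, and for $K\to\infty$ it tends to $-\infty$ since the square-root term grows like a constant while $2K$ is linear. Hence for $0\le\lambda\le 6$ we have $\varphi(K,\lambda)\le 2K$ for all $K\ge 1$, giving case (1) directly from part (1) of Proposition \ref{prop1}. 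For $\lambda>6$ we have $\varphi(1,\lambda)>2$, so by continuity there is a threshold $K_1>1$; I would show that the equation $\varphi(K,\lambda)=2K$ is, after isolating the radical and squaring, equivalent to the quartic \eqref{2}, and that \eqref{2} has a \emph{unique} root in $(1,\infty)$ — this uniqueness is what makes $K_1$ well-defined. The monotonicity/sign argument (e.g. Descartes' rule of signs on $16K^4+24K^3-(2\lambda-11)K^2-2(2\lambda-1)K-2\lambda-5$ together with the boundary values, or a direct derivative estimate showing $\varphi(K,\lambda)-2K$ is eventually decreasing) then gives $\varphi(K,\lambda)<2K$ for $K>K_1$, hence case (2) from Proposition \ref{prop1}(1), and $\varphi(K,\lambda)>2K$ for $1\le K<K_1$, hence case (3) from Proposition \ref{prop1}(2).

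For sharpness, I would exhibit for each regime an explicit \textbf{HQC} mapping in $\widehat{\mathcal{S}_\mathcal{H}}(K,\lambda)$ realizing the claimed order — this is exactly where the harmonic $K$-quasiconformal Koebe functions constructed earlier in the paper enter. In cases (1) and (2) the sharpness of $1/(2K)$ already follows from the quasiconformal extremal in Theorem \ref{thmB} (the radial-stretch-type map), once one checks it can be arranged to be harmonic with Schwarzian norm $\le\lambda$ and finite mean valency; in case (3) one needs the candidate extremal attaining $|a_2|=\varphi(K,\lambda)$, namely the harmonic $K$-quasiconformal Koebe function with the appropriate parameters, and then the lower bound on the order follows from a growth estimate showing this function fails to lie in $h^{1/\varphi(K,\lambda)}$, paralleling the classical computation for the Koebe function and Nowak's close-to-convex example.

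I expect the main obstacle to be the exact coefficient computation establishing \eqref{1}: proving that $\varphi(K,\lambda)$ is \emph{attained} and equals that precise closed form requires both an upper bound (a clean inequality $|a_2|\le\sqrt{1+\lambda/2+\tfrac12k^2}+k/2$ derived from $|S_f(0)|\le\lambda$ and $|\omega(0)|\le k$, which involves carefully disentangling the $S_h$, $\omega'$, and $\overline{\omega}\,\omega'$ terms in $S_f(0)$ and discarding the non-negative slack optimally) and a matching construction. The quartic \eqref{2} and its unique positive root $>1$ are then a routine, if slightly tedious, real-analysis verification; the sharpness in case (3) is the second most delicate point, since it hinges on the explicit extremal function from the paper's construction genuinely belonging to $\widehat{\mathcal{S}_\mathcal{H}}(K,\lambda)$ and having the right boundary growth.
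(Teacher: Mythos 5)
Your overall reduction is the same as the paper's: compute $\sup|a_2|$ over $\widehat{\mathcal{S}_\mathcal{H}}(K,\lambda)$, compare $\varphi(K,\lambda)$ with $2K$ (sign of $\varphi(K,\lambda)-2K$ at $K=1$, monotonicity, intermediate value theorem giving the unique root $K_1$ of the quartic \eqref{2}), and feed the result into Proposition \ref{prop1}. That part of your plan matches the paper's proof essentially step for step.

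The genuine gap is in how you propose to establish the key formula \eqref{1}. You want to extract the bound $|a_2|\le\sqrt{1+\lambda/2+\tfrac12k^2}+k/2$ from the pointwise inequality $|S_f(0)|\le\lambda$ together with $|\omega(0)|\le k$, "optimizing over the free parameters." This cannot work as described: $S_f(0)$ contains $S_h(0)=6(a_3-a_2^2)$ and the terms $\omega''(0)$, $\overline{\omega(0)}\,\omega'(0)$, so a single evaluation at the origin constrains a combination involving $a_3$ and second-order dilatation data and gives no bound on $|a_2|$ by itself; the whole difficulty of bounding the order of a Schwarzian-bounded family is that one must use the bound $\|S_f\|\le\lambda$ throughout the disk together with linear invariance (Pommerenke's differential-inequality argument in the analytic case, extended by Chuaqui--Hern\'{a}ndez--Mart\'{i}n to the harmonic AL setting). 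The paper does not redo this work: it imports Theorem \ref{thmD} (the order formula for $\mathcal{F}_\lambda$), shows via Lemma \ref{t012} that $\mathcal{S}_\mathcal{H}(K,\lambda)$ is a (strongly) affine and linear invariant family so that Theorem \ref{thmD} applies, and then caps the dilatation contribution by Lemma \ref{t011}: in the normalized subfamily one has $\omega(0)=0$, so Schwarz's lemma gives $|\omega'(0)|=|g''(0)|=2|b_2|\le k$, which is exactly the quantity $\sup|g''(0)|$ appearing in Theorem \ref{thmD}. Note also that your "extremal configuration $|\omega(0)|=k$" targets the wrong quantity: the sup in Theorem \ref{thmD} is taken over the subfamily with $g'(0)=0$, and the relevant extremal datum is $|\omega'(0)|=k$ (attained by $\omega(z)=ke^{i\theta}z$), not $|\omega(0)|$. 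Your sharpness discussion (Astala--Koskela extremal for cases (1)--(2), the harmonic $K$-quasiconformal Koebe function for case (3)) is in the spirit of the paper, which itself leans on the sharpness claims of Proposition \ref{prop1} and the construction of $f_k$, but without the correct route to \eqref{1} the proposal does not yet constitute a proof of the theorem.
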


\begin{rem}
{\rm For $K=1$ and $0\leq\lambda\leq6$, 
 the first part of Theorem \ref{t1} matches the classical analytic case, as originally examined by Pommerenke \cite{p} in 1964.
}
\end{rem}


This paper employs the weighted shearing technique to construct the first harmonic $K$-quasiconformal Koebe functions for the class, resolving a long-standing core open problem in the field. The constructed harmonic quasiconformal Koebe-type function serves as a candidate extremal function for this class, and a rigorous proof for the corresponding coefficient conjecture remains elusive to date, its difficulty being analogous to that of the classical Bieberbach conjecture. This construction enables the systematic extension of classical results from conformal mapping theory to the harmonic quasiconformal setting, thereby inaugurating a new systematic research program in geometric function theory.
We also establish the sharp order for harmonic $K$-quasiconformal mappings with bounded Schwarzian norm. Furthermore, employing a fundamental result on the $H^p$ theory for quasiconformal mappings due to Astala and Koskela \cite{ak}, we determine the optimal order for the family of harmonic $K$-quasiconformal mappings with bounded Schwarzian norm that lie in harmonic Hardy spaces. This partially resolves an open problem posed by Pavlovi'{c} \cite{pav} in 2014.

\vskip.20in

\section{Preliminaries}

In this section, we provide several required preliminary results.

\subsection{Estimation of $\abs{a_2}$ of a univalent harmonic mapping}
The problem of determining the sharp upper bound of $|{a_2}|$ in the class
$\mathcal{S}^0_\mathcal{H}$
is still an \textit{open} problem. Clunie and Sheil-Small \cite{cs} once proved that 
$$\abs{a_2}<12172.$$
Later, Sheil-Small \cite{s} improved it to $$\abs{a_2}<57.$$ The estimate $$\abs{a_2}<49$$ was
proved by Duren \cite[p. 96, Sect. 6.3]{d}. 
To our knowledge, the latest bound is $$\abs{a_2}<20.9197,$$ which was established
by Abu-Muhanna, Ali, and Ponnusamy \cite{aap} in 2019. 
It has been conjectured by Clunie and Sheil-Small \cite{cs}  that $$\abs{a_2}\leq\frac{5}{2},$$
where the equality is reached by the \textit{harmonic Koebe function} $\mathbb{K}$ defined by
$$\mathbb{K}(z):=\frac{z-\frac{1}{2}z^2+\frac{1}{6}z^3}{(1-z)^3}+\overline{\frac{\frac{1}{2}z^2+\frac{1}{6}z^3}{(1-z)^3}}.$$
This is the expected candidate for extremal mappings in the class $\mathcal{S}^0_\mathcal{H}$ of univalent harmonic mappings.

\subsection{Estimates of $\abs{a_2}$ of harmonic $K$-quasiconformal mappings}
We note that the problem of finding the sharp upper bound of $\abs{a_2}$ in the class $\mathcal{S}^0_\mathcal{H}(K)$
is also {\it challenging}. In 2022, Chen and Ponnusamy \cite[Proposition 1.6]{cp} proved the following rough estimate on $|a_2|$ for $f\in\mathcal{S}^0_\mathcal{H}(K)$.
\vskip.10in

\begin{thm} \label{thmA}
 For $K\geq1$, let $f=h+\overline{g}\in {\mathcal{S}_\mathcal{H}^0(K)}$, where $$h(z)=z +\sum_{n=2}^{\infty}a_n z^n\ {\textit and} \ g(z)=\sum_{n=2}^{\infty}b_n z^n.$$ Then 
$$|a_2|\leq\frac{32}{(1+1/K)^4}+\frac{64}{(1+1/K)^3}-2.$$
\end{thm}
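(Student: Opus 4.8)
The plan is to reduce the coefficient bound to an estimate on the harmonic pre-Schwarzian derivative at the origin, and then to control that quantity through the Schwarz--Pick lemma applied to the dilatation together with a distortion estimate for the analytic part. First I would record the normalizations: since $f=h+\overline{g}\in\mathcal{S}_\mathcal{H}^0(K)$ we have $h'(0)=1$ and $g'(0)=b_1=0$, so the dilatation $\omega=g'/h'$ satisfies $\omega(0)=0$ and $\abs{\omega}\le k$ with $k=(K-1)/(K+1)$. Writing $\omega(z)=c_1 z+\cdots$, the expansion $g'=\omega h'$ gives $2b_2=c_1$, and the Schwarz lemma applied to $\omega/k$ yields $\abs{c_1}\le k$, hence $\abs{b_2}\le k/2$.

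The key observation is the identity
$$P_f(0)=P_h(0)-\frac{\omega'(0)\,\overline{\omega(0)}}{1-\abs{\omega(0)}^2}=P_h(0)=\frac{h''(0)}{h'(0)}=2a_2,$$
where the middle term vanishes because $\omega(0)=0$. Consequently
$$\abs{a_2}=\tfrac12\abs{P_f(0)}\le\tfrac12\,\|P_f\|,$$
since the defining weight $1-\abs{z}^2$ equals $1$ at the origin. Thus it suffices to bound the pre-Schwarzian norm of $f$ over the class $\mathcal{S}_\mathcal{H}^0(K)$.

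To estimate $\|P_f\|$ I would split $P_f=P_h-\omega'\overline{\omega}/(1-\abs{\omega}^2)$ into its analytic and dilatation parts. The dilatation part is handled by the Schwarz--Pick lemma for $\omega/k$: one gets $\abs{\omega'}(1-\abs{z}^2)\le(k^2-\abs{\omega}^2)/k$, whence
$$\left|\frac{\omega'\,\overline{\omega}}{1-\abs{\omega}^2}\right|(1-\abs{z}^2)\le\frac{\abs{\omega}\,(k^2-\abs{\omega}^2)}{k\,(1-\abs{\omega}^2)}\le\sup_{0\le s\le k}\frac{s\,(k^2-s^2)}{k\,(1-s^2)},$$
an explicit function of $k$. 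The analytic part $\|P_h\|$ is the crux: it must be controlled by a distortion estimate for the analytic component $h$ of a sense-preserving harmonic $K$-quasiconformal univalent map, into which the quasiconformal hypothesis $\abs{\omega}\le k$ is fed through the dilatation. Finally I would add the two contributions and rewrite everything in terms of $k=(K-1)/(K+1)$, equivalently $1+k=2K/(K+1)=2/(1+1/K)$, so that a bound of the form $2(1+k)^4+8(1+k)^3-2$ becomes $32/(1+1/K)^4+64/(1+1/K)^3-2$.

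The hard part will be the distortion estimate for the analytic part $h$ with the correct $k$-dependence: because $h$ itself need not be univalent (only the full map $f$ is), one cannot invoke Bieberbach's theorem directly, and the quasiconformal hypothesis must be inserted in a way that tracks the exact powers of $1+k$. Reproducing the precise constant, rather than merely a bound of this shape, is where the computation is most delicate; this is also why the resulting estimate is only a rough one, as is already visible from the fact that it degenerates to $\abs{a_2}\le8$ rather than the sharp value $2$ when $K=1$ (where $k=0$ forces $\omega\equiv0$, hence $g\equiv0$ and $f\in\mathcal{S}$).
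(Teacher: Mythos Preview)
First, note that this statement is not proved in the present paper: it is quoted from Chen and Ponnusamy \cite[Proposition~1.6]{cp} as a preliminary result, so there is no proof here against which to compare your attempt.

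Regarding your outline itself: the setup is correct---the normalizations, the Schwarz lemma giving $|b_2|\le k/2$, the identity $P_f(0)=P_h(0)$ forced by $\omega(0)=0$, and the rewriting of the target bound as $2(1+k)^4+8(1+k)^3-2$---but the argument does not advance beyond that. You pass from $|a_2|=\tfrac12|P_h(0)|$ to $\tfrac12\|P_f\|$, then split $\|P_f\|\le\|P_h\|+\bigl\|\omega'\overline{\omega}/(1-|\omega|^2)\bigr\|$, bound the dilatation piece by Schwarz--Pick, and leave $\|P_h\|$ as ``the crux''. But $\|P_h\|\ge|P_h(0)|=2|a_2|$, so this detour has replaced the original problem by one that is at least as hard and then added a non-negative term on top. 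Nothing has been gained; the entire content of the theorem is precisely the estimate you have not supplied.

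The missing ingredient is an independent geometric control on $h$ coming from the univalence and $K$-quasiconformality of $f$ as a whole---typically a Koebe-type covering theorem for $\mathcal{S}^0_\mathcal{H}(K)$ (an explicit $K$-dependent radius for the disk contained in $f(\D)$), which is then converted into a growth bound on $|h'|$ and finally into a coefficient estimate. The very title of \cite{cp} indicates that this is the order of the argument there: the Koebe-type theorem comes first, and the pre-Schwarzian and coefficient bounds are applications of it, not the other way around. Your proposal reverse-engineers the final algebraic form correctly but provides no path to it.
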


\begin{rem}\label{r01}
{\rm For $K=1$ in Theorem \ref{thmA}, we observe that the result reduces to the case $\mathcal{S}^0_\mathcal{H}(1)=:\mathcal{S}$
of conformal mappings. 
At this time, we find that $$|a_2|\leq 8.$$ However, by Bieberbach's theorem, the sharp bound $$|a_2|\leq 2\ {\rm or}\ \alpha(\mathcal{S})=2$$ holds for $f\in\mathcal{S}$,  which shows that there still is substantial room for improvement on the upper bound of $\abs{a_2}$ for the family $\mathcal{S}^0_\mathcal{H}(K)$.}
\end{rem}









In 2017, Chuaqui, Hern\'{a}ndez, and Mart\'{i}n \cite[Theorem 1]{chm} proved the following result regarding the order
of the affine and linear invariant family $\mathcal{F}_{\lambda}$. 
We observe that this result will play an important role in our study.

\vskip.10in
\begin{thm} \label{thmD}
The order of the family $\mathcal{F}_{\lambda}$ is given by 
\begin{align*}
\begin{split}\alpha\left(\mathcal{F}_{\lambda}\right)=\sqrt{1+\frac{\lambda}{2}+\frac{1}{2}\sup\limits_{f\in\mathcal{F}_{\lambda}^0}
\abs{g''(0)}^2}+\frac{1}{2}\sup\limits_{f\in\mathcal{F}_{\lambda}^0}
\abs{g''(0)}. \end{split}
\end{align*}
\end{thm}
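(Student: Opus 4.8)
The plan is to reduce the computation of $\alpha(\mathcal{F}_\lambda)=\sup_{f\in\mathcal{F}_\lambda}|a_2|$ to a sharp coefficient estimate on the normalized subfamily $\mathcal{F}_\lambda^0$ (those $f=h+\overline g$ with $g'(0)=0$), and then to obtain that estimate by a Pommerenke-type variational argument in which the harmonic Schwarzian plays the role that the analytic Schwarzian plays in the classical linear-invariant theory. Throughout I write $h(z)=z+a_2z^2+\cdots$ and $g(z)=b_1z+b_2z^2+\cdots$.

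\emph{Step 1 (affine reduction).} Under the affine change $A_\varepsilon$ the analytic second coefficient transforms as $a_2\mapsto (a_2-\overline\varepsilon\,b_2)/(1-\overline\varepsilon\,b_1)$. Since $\mathcal{F}_\lambda$ is affine and linear invariant, I first normalize $f$ into $\mathcal{F}_\lambda^0$ by taking $\varepsilon=b_1$, and then let $\varepsilon$ range over $\mathbb{D}$; maximizing $|a_2-\overline\varepsilon\,b_2|$ over $|\varepsilon|<1$ yields $|a_2|+|b_2|$. Hence
\[
\alpha(\mathcal{F}_\lambda)=\sup_{f\in\mathcal{F}_\lambda^0}\bigl(|a_2|+|b_2|\bigr),
\]
so that $\tfrac12\sup_{f\in\mathcal{F}_\lambda^0}|g''(0)|=\sup_{f\in\mathcal{F}_\lambda^0}|b_2|$ is exactly the affine datum appearing in the statement.

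\emph{Step 2 (the key estimate).} For $f\in\mathcal{F}_\lambda^0$ the dilatation satisfies $\omega(0)=0$ and $\omega'(0)=g''(0)$, so every dilatation term in $S_f$ vanishes at the origin and $S_f(0)=S_h(0)=6(a_3-a_2^2)$, with $|S_f(0)|\le\lambda$. To convert this into a bound on $|a_2|$ alone I use linear invariance: the analytic part of $K_\zeta f$ is the Koebe transform of $h$, whose second coefficient is $a_2(K_\zeta f)=\tfrac12(1-|\zeta|^2)P_h(\zeta)-\overline\zeta$. Following $K_\zeta$ with the affine change that restores $g'(0)=0$ produces a curve in $\mathcal{F}_\lambda^0$ through $f$, and imposing that $|a_2|+|b_2|$ be stationary at $\zeta=0$ gives, via $P_h'(0)=S_h(0)+\tfrac12P_h(0)^2$, an identity relating $|a_2|^2$, $S_f(0)$ and the dilatation coefficients $b_2,\,\omega'(0)$. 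Invoking $|S_f(0)|\le\lambda$ and bounding the dilatation contribution by the Schwarz lemma applied to $\omega$ yields the pointwise estimate $|a_2|^2\le 1+\tfrac\lambda2+\tfrac12|g''(0)|^2$.

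\emph{Step 3 (optimization and sharpness).} Because $t\mapsto\sqrt{1+\lambda/2+t^2/2}+t/2$ is increasing, Steps 1 and 2 give $|a_2|+|b_2|\le\sqrt{1+\lambda/2+\tfrac12|g''(0)|^2}+\tfrac12|g''(0)|\le\sqrt{1+\lambda/2+\tfrac12c^2}+\tfrac c2$ with $c=\sup_{\mathcal{F}_\lambda^0}|g''(0)|$, which is the asserted upper bound. For the reverse inequality I would exhibit an extremal $f_*\in\mathcal{F}_\lambda^0$ whose analytic part is the generalized Koebe function of order $\sqrt{1+\lambda/2+c^2/2}$ and whose dilatation is linear with $|\omega_*'(0)|=c$, tuned so that the dilatation terms bring $\|S_{f_*}\|$ down to exactly $\lambda$; the affine images $A_\varepsilon(f_*)$ with $|\varepsilon|\to1$ then attain $|a_2|+|b_2|$ in the limit. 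The main obstacle is Step 2: rigorously controlling the dilatation's contribution to the harmonic Schwarzian and proving it is exactly $\tfrac12|g''(0)|^2$ and no larger. This is where the harmonic (rather than analytic) nature is felt—the affine extremal limit is boundary-degenerate ($|\varepsilon|\to1$, so $1-|\omega|^2\to0$), and extracting the sharp constant requires the precise form of the Schwarz–Pick lemma for $\omega$ together with a normal-family/compactness argument securing the existence of extremals.
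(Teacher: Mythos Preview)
The paper does not prove Theorem~D. It is quoted as \cite[Theorem~1]{chm} (Chuaqui, Hern\'andez, Mart\'in, 2017) and invoked as a black box in the proof of Proposition~\ref{c1}; no argument for it appears anywhere in the present manuscript. There is therefore nothing here to compare your proposal against.

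As a standalone sketch, your Step~1 is correct: affine invariance does give
\[
\alpha(\mathcal{F}_\lambda)=\sup_{f\in\mathcal{F}_\lambda^0}\bigl(|a_2|+|b_2|\bigr),
\]
and this is indeed how the argument in \cite{chm} begins. Step~2 is where the content lies, and you rightly flag it as incomplete. Your phrasing---``imposing that $|a_2|+|b_2|$ be stationary at $\zeta=0$'' and then invoking $|S_f(0)|\le\lambda$---glosses over the actual mechanism. In the Pommerenke-style argument one differentiates $|a_2(K_\zeta f)|$ along a ray $\zeta=te^{i\theta}$, obtains a first-order differential inequality in $t$ whose inhomogeneous term involves $S_f$ evaluated along the entire orbit (not just at the origin), and then integrates. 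The constant $\tfrac12|g''(0)|^2$ emerges from the precise form of that inequality after the affine renormalization, not from a separate Schwarz--Pick bound on $\omega$ as you suggest. As written, a stationarity condition at a single point is too weak to produce the pointwise estimate $|a_2|^2\le 1+\lambda/2+\tfrac12|g''(0)|^2$; you would need the full differential-inequality/integration step to close the gap. Your Step~3 is a reasonable plan for sharpness, but the boundary-degenerate limit $|\varepsilon|\to1$ and the existence of extremals do require the compactness argument you allude to.
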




\vskip.20in
\section{Harmonic $K$-quasiconformal Koebe functions}\label{s3}

A long-standing bottleneck in the study of harmonic quasiconformal mappings lies in the absence of a canonical Koebe-type extremal function, which has significantly impeded the advancement of its extremal theory as well as the generalization of classical conformal mapping results to the harmonic quasiconformal framework. In this section, we remedy this critical gap by constructing such functions for the first time.

In what follows, we construct the harmonic $K$-quasiconformal Koebe functions
$$f_k(z)=h(z)+\overline{g(z)}$$
via the \textit{weighted shearing technique}, an approach adapted from the classical shearing method introduced by Clunie and Sheil-Small \cite{cs}.
The defining relations for its components are given by
$$h(z)-g(z)=\frac{z}{(1-z)^2}$$
and
$$\omega(z)=\frac{g'(z)}{h'(z)}=k z\quad (0\leq k<1).$$


Solving the above system of differential equations and normalization, for $0\leq k<1$, we obtain \begin{align}\begin{split}\label{31}
  f_k(z)&=\frac{1}{(k-1)^3}{\left[\frac{(k-1)(1-3k+2kz)z }{(1-z)^2}+k(k+1) \log\left(\frac{1-z}{1-k z}\right)\right]}\\ & \qquad\ \ \ +\frac{k}{(k-1)^3}\overline{{\left[\frac{(1-k)(1+k-2z)z}{(1-z)^2}+(k+1) \log\left(\frac{1-z}{1-k z}\right)\right]} }.\end{split}
\end{align}
The power series form of $f_k$ is 
\begin{equation}\label{032}
\begin{aligned}
    f_k(z)=z+\sum_{n=2}^{\infty}A(n,k)z^n+\sum_{n=2}^{\infty}B(n,k)\overline{z}^n,
\end{aligned}
\end{equation}
where \begin{equation}\begin{aligned}\label{033}
A(n,k):=\frac{(1-k)^2n^2-2k(1-k)n+k(1+k)(1-k^n)}{(1-k)^3\,n}
\end{aligned}
\end{equation} and \begin{equation}\begin{aligned}\label{034}
B(n,k):=\frac{k(1-k)^2n^2-2k(1-k)n+k(1+k)(1-k^n)}{(1-k)^3\,n}.
\end{aligned}
\end{equation}

The images of the unit disk under analytic Koebe function $f_{0}(z)$, 
$f_{1/5}(z)$, $f_{2/5}(z)$, $f_{3/5}(z)$, $f_{4/5}(z)$ 
and harmonic Koebe function $\mathbb{K}(z)$ are illustrated, respectively, in Figure \ref{Fig_koebe_table}.

\begin{figure}[H] 
\centering 
\subfigure[Image of analytic Koebe function $f_0(z)$.]{ 
\label{Fig.sub.1} 
\includegraphics[height=6.0cm]{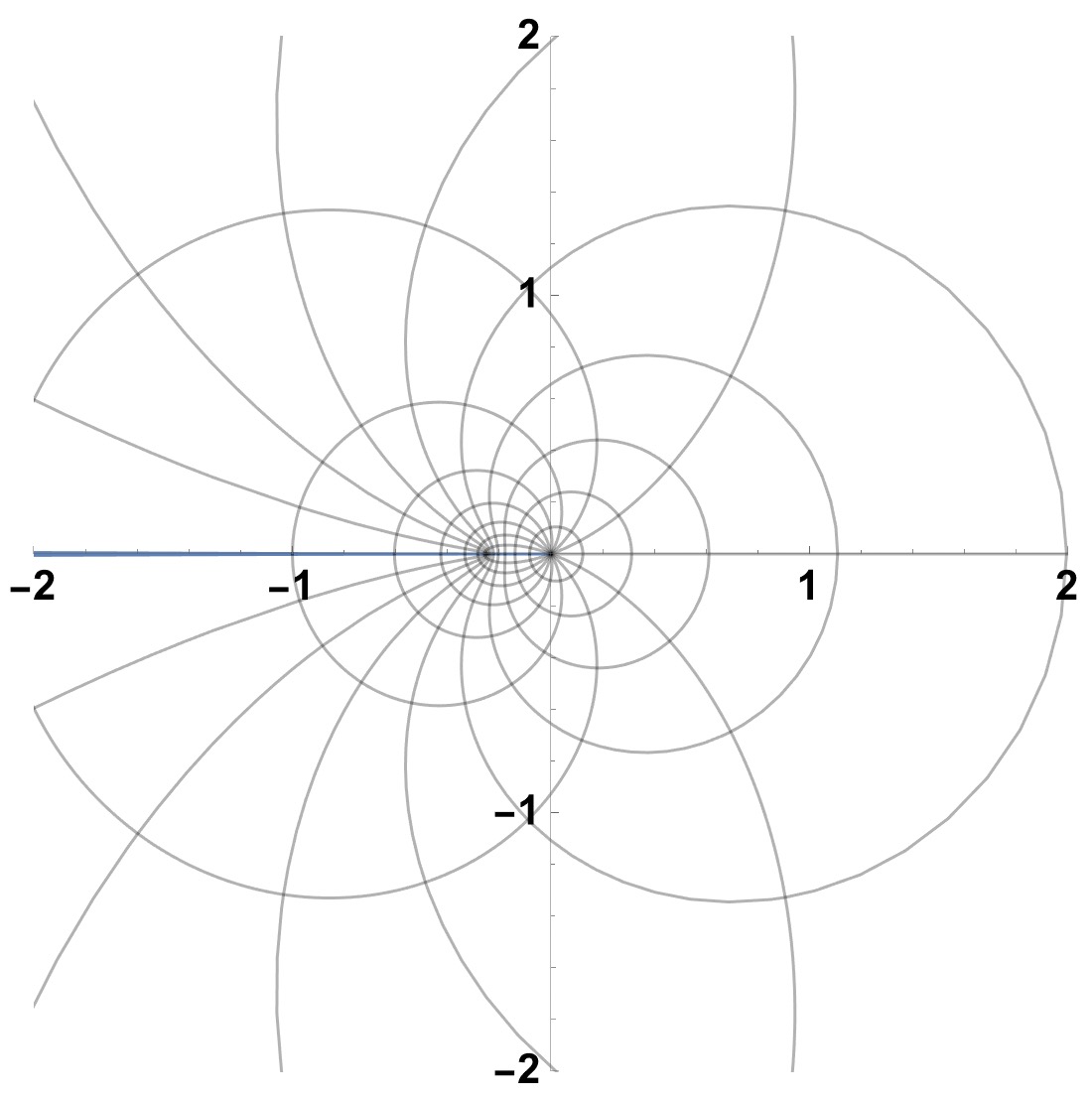}} 
\subfigure[Image of $f_{1/5}(z)$.]{ 
\label{Fig.sub.2} 
\includegraphics[height=6.0cm]{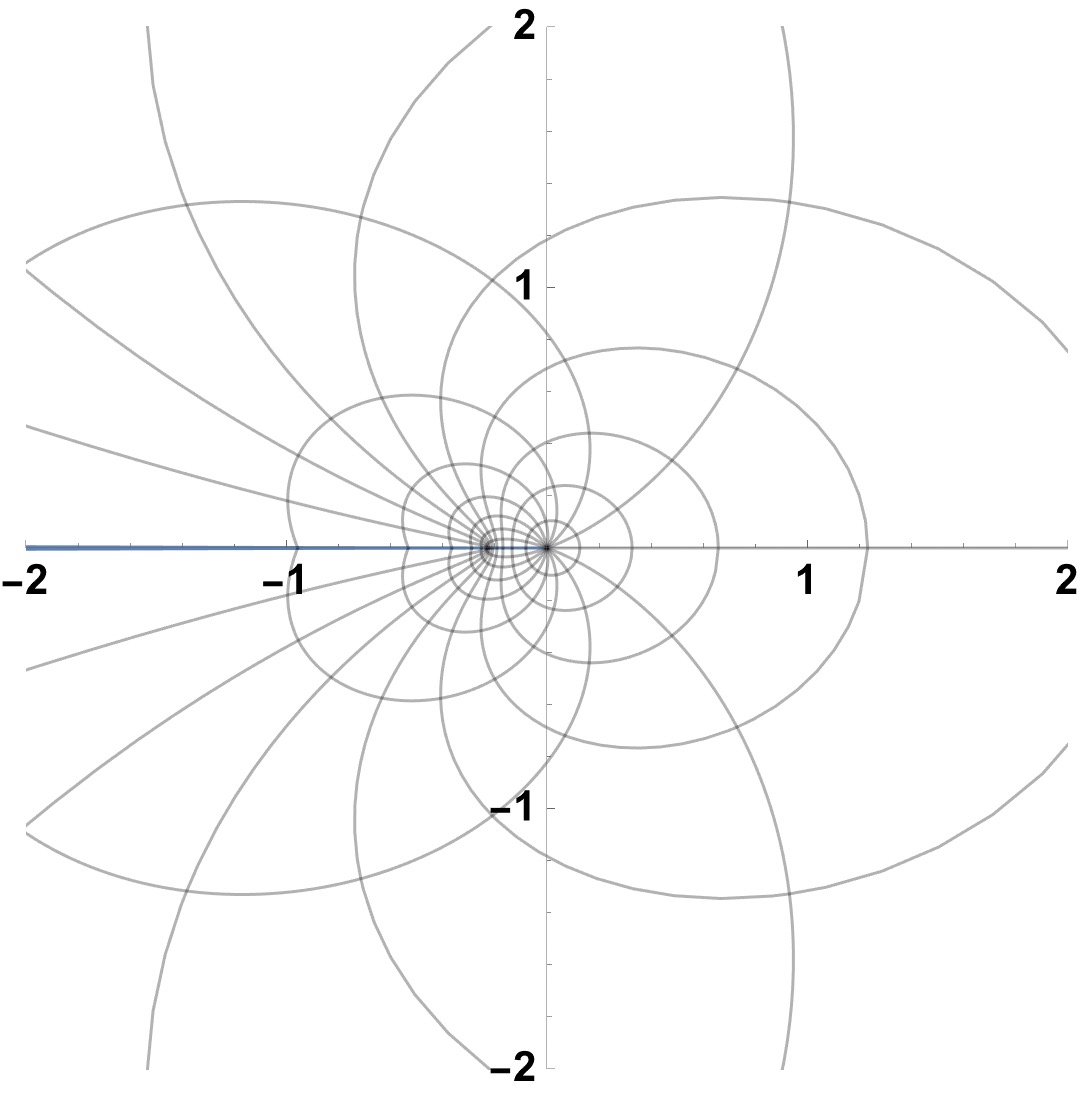}} 
\end{figure}

\begin{figure}[H] 
\centering 
\subfigure[Image of $f_{2/5}(z)$.]{ 
\label{Fig.sub.3} 
\includegraphics[height=6.0cm]{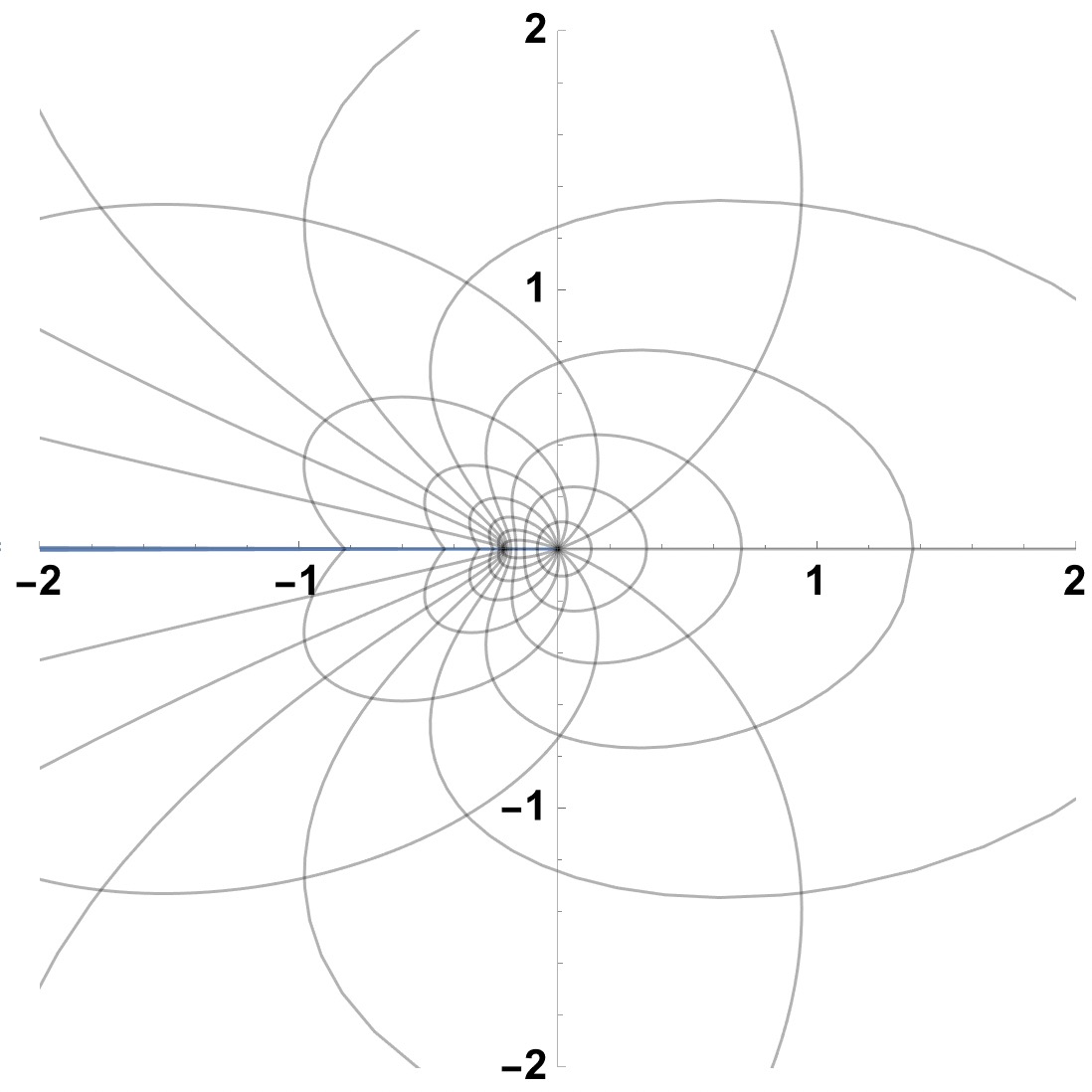}} 
\subfigure[Image of $f_{3/5}(z)$.]{ 
\label{Fig.sub.4} 
\includegraphics[height=6.0cm]{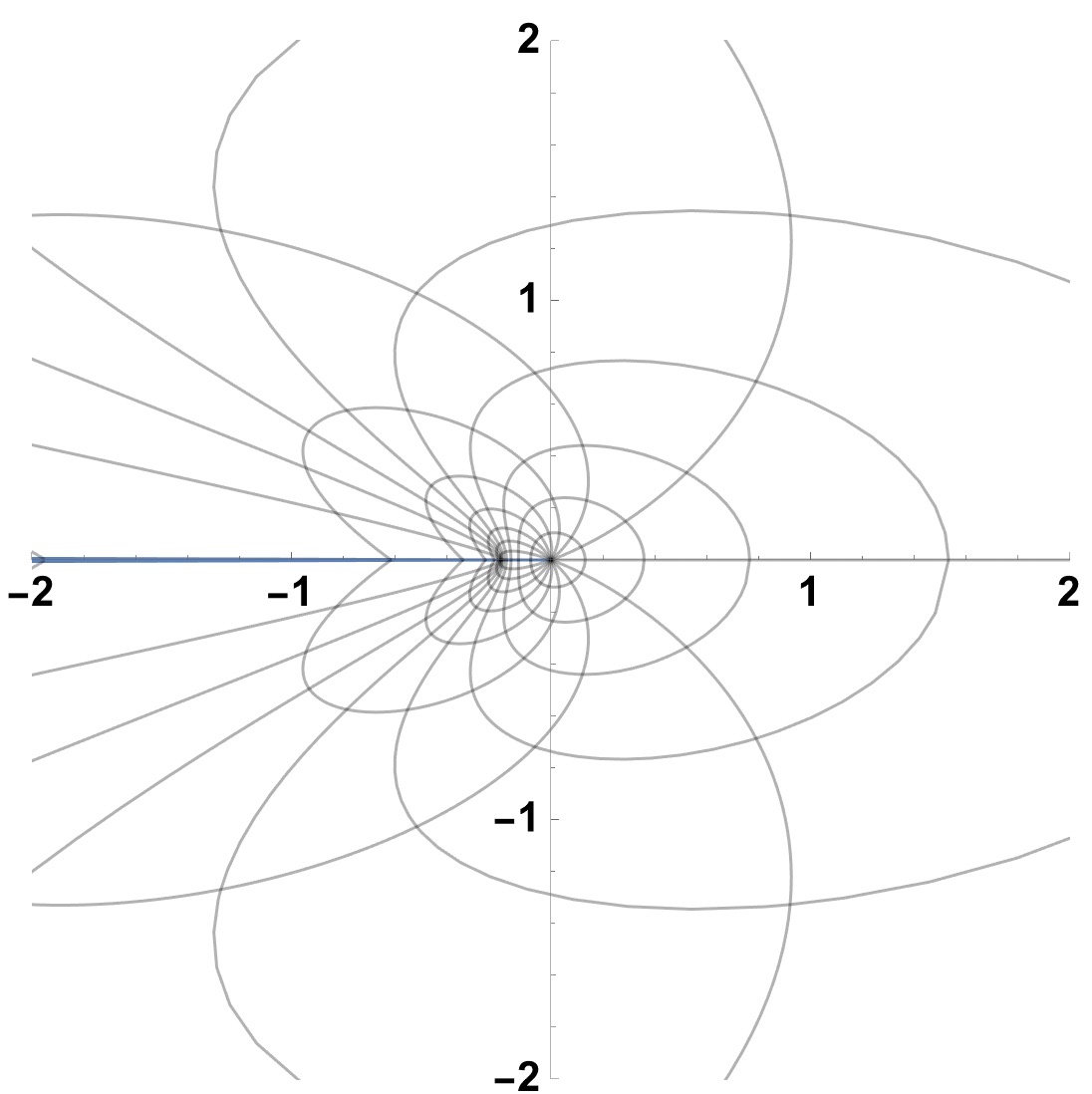}} 
\end{figure}

\begin{figure}[H] 
\centering 
\subfigure[Image of $f_{4/5}(z)$.]{ 
\label{Fig.sub.5} 
\includegraphics[height=6.0cm]{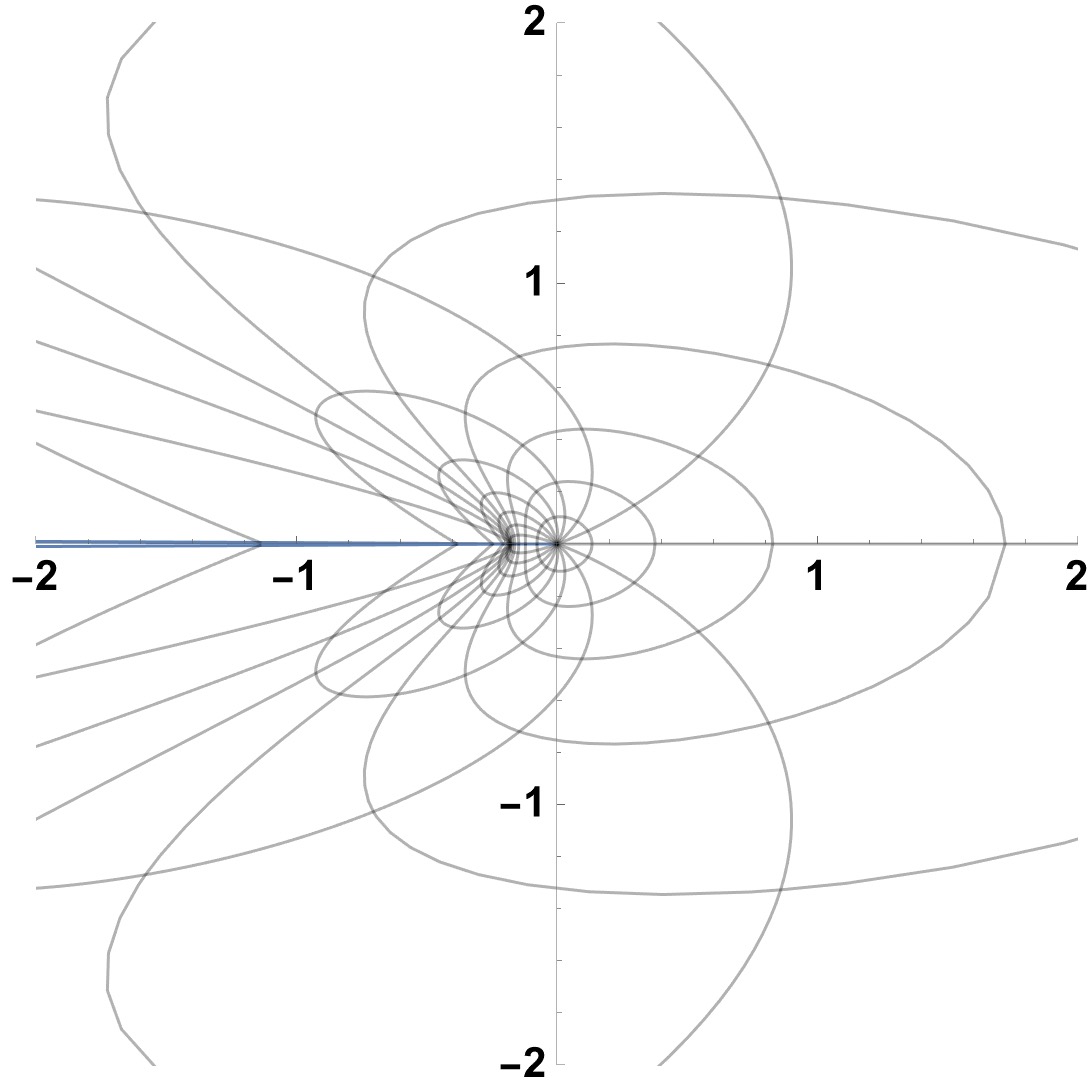}} 
\subfigure[Image of harmonic Koebe function $\mathbb{K}(z)$.]{ 
\label{Fig.sub.6} 
\includegraphics[height=6.0cm]{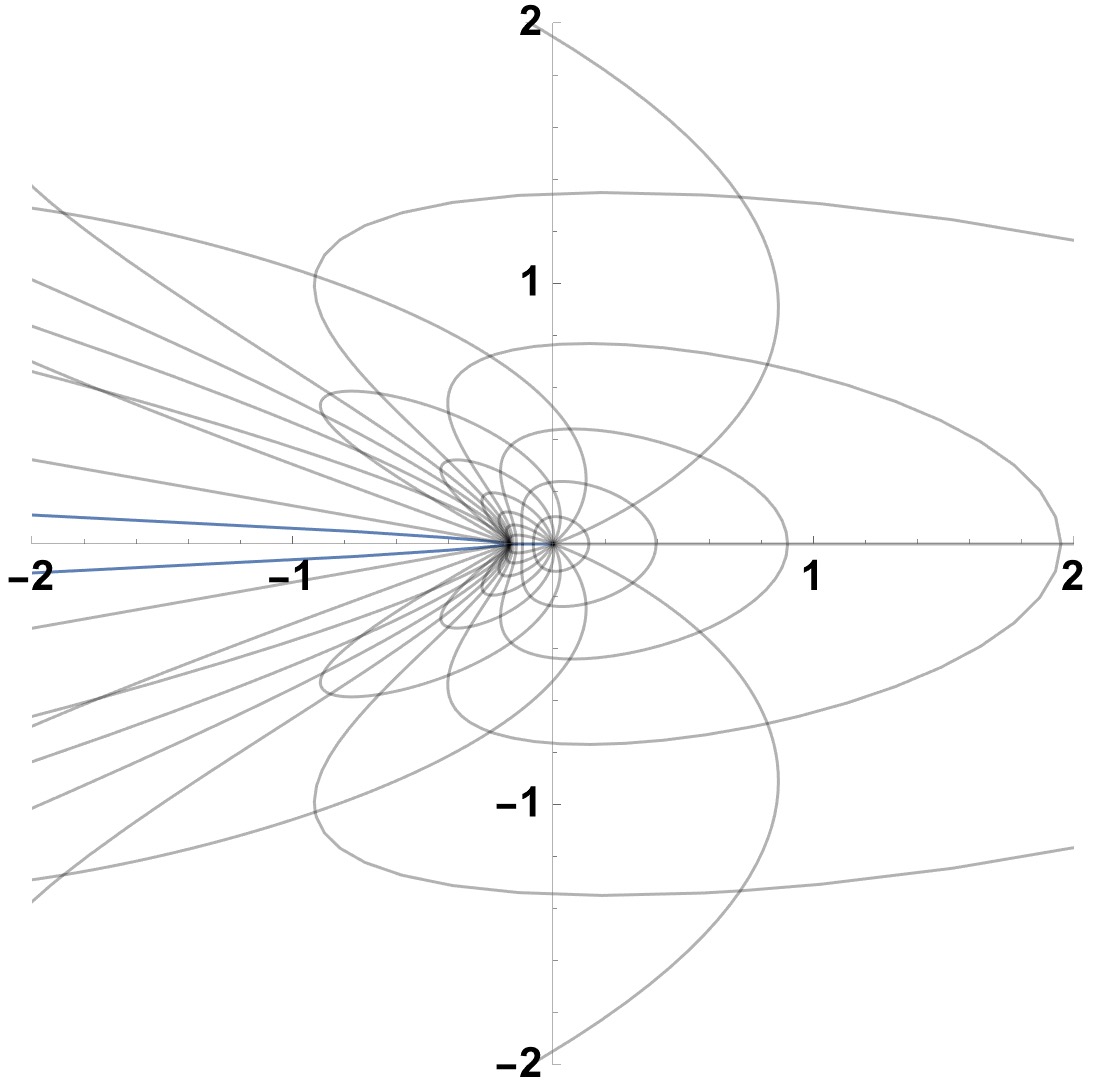}} 
\caption{\small{Images of the unit disk under the mappings $f_{0}(z)$, $f_{1/5}(z)$, $f_{2/5}(z)$, $f_{3/5}(z)$, $f_{4/5}(z)$, 
and $\mathbb{K}(z)$.}}
\label{Fig_koebe_table} 
\end{figure}




By noting that the coefficient conjecture of Clunie and Sheil-Small \cite{cs} holds for the close-to-convex, starlike and typically real harmonic mappings, and harmonic quasiconformal mappings is a subclass of univalent harmonic mappings, the Bieberbach-type coefficient conjecture for \textbf{HQC} is confirmed valid by Li and Ponnusamy \cite{lp} for close-to-convex, starlike, and typically real harmonic quasiconformal mappings (see also \cite{dhr}). For recent investigations on Koebe-type functions in the setting of harmonic quasiconformal mappings, see \cite{wqr}.


It is natural to conjecture that the function $f_k(z)$ serves as the extremal mapping for the class $\mathcal{S}^0_\mathcal{H}(K)$ of \textbf{HQC} mappings. This not only opens a novel perspective on the class $\mathcal{S}^0_\mathcal{H}(K)$, but also provides a fundamental bridge connecting the theories of classical conformal mappings, harmonic mappings, quasiconformal mappings, Teichmüller spaces, and even SLE theory.

Indeed, the above formulas lead us to conjecture that the extremal function in $\mathcal{S}^0_\mathcal{H}(K)$ is unique and coincides with the harmonic $K$-quasiconformal Koebe functions.

\begin{con}\label{conj00}{\rm (\textbf{HQC Bieberbach conjecture})}
 For $K\geq1$, let $f=h+\overline{g}\in {\mathcal{S}_\mathcal{H}^0(K)}$, where $$h(z)=z +\sum_{n=2}^{\infty}a_n z^n\ {\rm and} \ g(z)=\sum_{n=2}^{\infty}b_n z^n.$$ Then 
$$\abs{a_n}\leq A(n,k),\, \abs{b_n}\leq B(n,k),\,{\rm and}\, \abs{\abs{a_n}-\abs{b_n}}\leq n\,\, {\rm for}\,\, n\geq 2,$$
where $A(n,k)$ and $B(n,k)$ are given by \eqref{033} and \eqref{034}, respectively, with $$k=\frac{K-1}{K+1}\quad(K\geq1).$$
\end{con}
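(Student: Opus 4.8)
The plan is to attack Conjecture \ref{conj00} through a staged program, because the full statement simultaneously contains two classical hard problems and a frontal assault is hopeless with present tools. First observe its structure: with $k=0$ one has $A(n,0)=n$ and $B(n,0)=0$, so the case $K=1$ is exactly the Bieberbach conjecture, i.e.\ de~Branges's theorem \cite{de}; and since the construction fixes $h-g=z/(1-z)^2$, one gets $A(n,k)-B(n,k)=n$, and letting $k\to1$ yields $A(n,k)\to(n+1)(2n+1)/6$ and $B(n,k)\to(n-1)(2n-1)/6$, the Taylor coefficients of the harmonic Koebe function $\mathbb{K}$, so the limiting case is the Clunie--Sheil-Small harmonic coefficient conjecture \cite{cs}, still open. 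Accordingly I would work in three stages: (i) the geometrically restricted subfamilies; (ii) the order, i.e.\ the case $n=2$, for the full class; (iii) the parametric machinery needed for general $n$.

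For stage (i) I would use the shearing decomposition $f=h+\overline g\mapsto(h-g,\omega)$. For $f\in\mathcal{S}^0_\mathcal{H}(K)$ in the close-to-convex, starlike, or typically real harmonic subclass, write $h-g=z+\sum_{n\ge2}c_nz^n$; here $h-g$ is controlled by the corresponding classical analytic class, where $|c_n|\le n$ is classical. The convolution relations expressing $a_n$, $b_n$ through the coefficients of $h-g$ and of the dilatation, together with $|\omega(z)|\le k$, then give $|a_n|\le A(n,k)$ and $|b_n|\le B(n,k)$; and since $a_n-b_n=c_n$, the reverse triangle inequality gives $\abs{\abs{a_n}-\abs{b_n}}\le|c_n|\le n$, with equality throughout precisely for $f_k$. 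This is essentially the Li--Ponnusamy argument \cite{lp} (see also \cite{dhr}) made explicit with the weight $\omega(z)=kz$; the task is only to verify that the extremal configuration is $f_k$.

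Stage (ii), the sharp value of the order $\alpha\bigl(\mathcal{S}^0_\mathcal{H}(K)\bigr)=\sup\abs{a_2}$, is itself a recognized open problem (Remark \ref{r02}): the current record, Theorem \ref{thmA}, gives only $\abs{a_2}\le 8$ at $K=1$, against the target $A(2,k)=(k+4)/2$, which interpolates the Bieberbach value $2$ and the harmonic value $5/2$. I would combine the affine and linear invariance of $\mathcal{S}^0_\mathcal{H}(K)$ with the Chuaqui--Hern\'{a}ndez--Mart\'{i}n order formula (Theorem \ref{thmD}), feeding in $a_2=\tfrac12 h''(0)$ and $g''(0)=\omega'(0)$ with the Schwarz--Pick bound $|\omega'(0)|\le k$, together with a uniform Schwarzian bound for univalent harmonic mappings; this is precisely the mechanism behind $\varphi(K,\lambda)$ in Theorem \ref{t1}, so what is needed is the analogue of Theorem \ref{t1} valid without the a priori restriction $\|S_f\|\le\lambda$.

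For stage (iii) I would develop a Loewner--Kufarev type representation of $\mathcal{S}^0_\mathcal{H}(K)$, realizing $f$ as the terminal value of a harmonic quasiconformal Loewner chain whose analytic part solves a radial Loewner equation driven by a unimodular function and whose second dilatation evolves so as to preserve $\|\omega(\cdot,t)\|_\infty\le k$, with $f_k$ corresponding to a single-point driving term and the stationary dilatation $\omega(z,t)=kz$; differentiating the chain and comparing Taylor coefficients should produce a system of differential inequalities for $a_n(t)$, $b_n(t)$ dominated by $A(n,k)$, $B(n,k)$, or alternatively one proves HQC analogues of the Grunsky and Goluzin inequalities and runs a Lebedev--Milin exponentiation. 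The main obstacle is exactly stage (iii): its $K\to\infty$ specialization is the unresolved harmonic Bieberbach conjecture and its $K=1$ specialization is de~Branges's theorem, so the general statement is at least as deep as both at once. A realistic near-term goal is therefore to complete (i), obtain (ii) under a bounded-Schwarzian hypothesis as in Theorem \ref{t1}, and establish (iii) for $n=3$, leaving the full coefficient conjecture as the long-term target.
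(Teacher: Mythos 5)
Be clear about what you were up against: the paper does not prove Conjecture~\ref{conj00}. It is stated as an open conjecture, supported only by the construction of the candidate extremal $f_k$, by consistency checks, and by citing Li and Ponnusamy \cite{lp} for the close-to-convex, starlike and typically real subclasses; the authors themselves remark that its difficulty is analogous to that of the classical Bieberbach conjecture. So there is no proof in the paper to compare yours against, and your submission, which is a staged research program rather than a proof, is in substance the same position the paper takes. Your verifiable claims are correct and worth recording: $A(n,0)=n$, $B(n,0)=0$, so $K=1$ is de~Branges's theorem \cite{de}; $A(n,k)-B(n,k)=n$ because $h-g=z/(1-z)^2$; the limits $A(n,k)\to (n+1)(2n+1)/6$ and $B(n,k)\to (n-1)(2n-1)/6$ as $k\to 1$ recover the coefficients of $\mathbb{K}$, i.e.\ the Clunie--Sheil-Small conjecture \cite{cs}; and $A(2,k)=(k+4)/2=(5K+3)/(2K+2)$, matching Conjecture~\ref{conj01}. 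Your stage (i) coincides with what the paper already attributes to \cite{lp} (see also \cite{dhr}), and your stage (ii) correctly identifies the mechanism behind $\varphi(K,\lambda)$ in Theorem~\ref{t1}, which the paper only establishes under the additional hypothesis $\|S_f\|\leq\lambda$.

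That said, as a proof there is a genuine gap, and you should not understate it even in the stages you label as near-term. In stage (i) the assertion that the convolution relations together with $\abs{\omega(z)}\leq k$ ``then give'' $\abs{a_n}\leq A(n,k)$ and $\abs{b_n}\leq B(n,k)$ is not an argument: the sharp bounds require solving an extremal problem over all admissible dilatations with $\omega(0)=0$, $\abs{\omega}\leq k$, not merely plugging in $\omega(z)=kz$, and this is precisely where \cite{lp} does the real work; only the third inequality $\abs{\abs{a_n}-\abs{b_n}}\leq\abs{a_n-b_n}=\abs{c_n}\leq n$ follows as easily as you claim, and even that only on subclasses where $\abs{c_n}\leq n$ is known. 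Stages (ii) and (iii) are aspirational: stage (ii) is exactly the open problem flagged in Remark~\ref{r02} (Theorem~\ref{thmA} gives only $\abs{a_2}\leq 8$ at $K=1$), and stage (iii) contains, as you yourself note, both de~Branges's theorem and the unresolved harmonic Bieberbach conjecture as specializations, with no currently known harmonic quasiconformal Loewner or Grunsky machinery to run. So the honest verdict is: your proposal proves nothing beyond what the paper already cites, but it is an accurate map of the terrain, and its concrete computations correctly corroborate the internal consistency of the conjectured bounds.
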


However, this somewhat is premature in view that the related conjecture for the class $\mathcal{S}_\mathcal{H}^0$ (see e.g., \cite[pp. 87--88]{d}) remains 
an \textit{open} problem.

By the above coefficient conjecture for the class $\mathcal{S}^0_\mathcal{H}(K)$, we obtain the conjectured sharp upper bound of $\abs{a_2}$, which will play a distinguished role to study various characterizations for the class  $\mathcal{S}^0_\mathcal{H}(K)$ (or even the class $\mathcal{S}^0_\mathcal{H}$). 

\begin{con}\label{conj01}
Suppose that $f=h+\overline{g}\in\mathcal{S}^0_\mathcal{H}(K)$. Then
$$\sup\limits_{f\in\mathcal{S}_\mathcal{H}^0(K)}\abs{a_2}=\frac{5K+3}{2K+2}.$$
The bound is expected to be sharp.
\end{con}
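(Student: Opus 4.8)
The plan is to split the claimed equality into its two halves: the sharpness bound $\sup_{f\in\mathcal{S}^0_\mathcal{H}(K)}|a_2|\geq\frac{5K+3}{2K+2}$, which I would obtain unconditionally from the explicit function $f_k$ of Section~\ref{s3}, and the upper bound $|a_2|\leq\frac{5K+3}{2K+2}$ for every $f\in\mathcal{S}^0_\mathcal{H}(K)$, which is the genuine content and which I would deduce from the Bieberbach-type Conjecture~\ref{conj00} in its $n=2$ instance. Since $\mathcal{S}^0_\mathcal{H}(1)=\mathcal{S}$, the case $K=1$ is already unconditional and serves as a sanity check: there the right-hand side equals $2$, the classical Bieberbach bound.

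For the sharpness half, I would verify directly that $f_k$ of \eqref{31} belongs to $\mathcal{S}^0_\mathcal{H}(K)$ with $k=(K-1)/(K+1)$. Its dilatation is $\omega(z)=kz$, so $\|\omega\|_\infty=k<1$ and $\omega(0)=0$; since $h-g=z/(1-z)^2$ maps $\D$ conformally onto a domain convex in the direction of the real axis and $|g'|=|kz|\,|h'|<|h'|$ on $\D$, the Clunie--Sheil-Small shearing theorem \cite{cs} gives that $f_k$ is univalent, sense-preserving and close-to-convex in $\D$, while $g'(0)=\omega(0)h'(0)=0$ places $f_k$ in $\mathcal{S}^0_\mathcal{H}$. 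Reading the coefficient off \eqref{033} and simplifying via the factorization $k^3+2k^2-7k+4=(k-1)^2(k+4)$ yields $a_2(f_k)=A(2,k)=\tfrac12(k+4)=\frac{5K+3}{2K+2}$ after the substitution $k=(K-1)/(K+1)$, so $\sup_{f\in\mathcal{S}^0_\mathcal{H}(K)}|a_2|\geq\frac{5K+3}{2K+2}$.

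For the upper bound, the $n=2$ instance of Conjecture~\ref{conj00} reads precisely $|a_2|\leq A(2,k)=\frac{5K+3}{2K+2}$ for every $f\in\mathcal{S}^0_\mathcal{H}(K)$, which together with the previous paragraph closes the argument. This upper bound is unconditional on the close-to-convex, starlike, and typically real harmonic $K$-quasiconformal subclasses, where Conjecture~\ref{conj00} is a theorem of Li and Ponnusamy \cite{lp} (and $f_k$ itself lies in the close-to-convex subclass, so there the equality is a theorem). One cannot, however, simply invoke the order-of-an-AL-family formalism of Theorem~\ref{thmD} to handle the full class: $\mathcal{S}^0_\mathcal{H}(K)$ is not invariant under the affine changes $A_\varepsilon$, which replace $\omega$ by $(\omega-\varepsilon)/(1-\overline{\varepsilon}\,\omega)$ and hence inflate the dilatation bound $|\omega|\le k$, so it is not an AL family whose order equals $\sup|a_2|$.

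The main obstacle is exactly the upper bound on the full class $\mathcal{S}^0_\mathcal{H}(K)$: as emphasized in Remark~\ref{r02}, this is an open problem, and the best unconditional estimate currently available (Theorem~\ref{thmA}) is $\frac{32}{(1+1/K)^4}+\frac{64}{(1+1/K)^3}-2$, already off by a factor of $4$ at $K=1$, so no present coefficient method reaches the sharp constant. A plausible but unfinished route would pair the Schwarz-lemma bound $|b_2|\le k/2$ for the co-analytic part (from $\omega(0)=0$ and $\omega'(0)=2b_2$, matching $B(2,k)=k/2$) with a univalence constraint linking $a_2$ to $b_2$ — a harmonic analogue of the area or Grunsky inequalities, or a distortion bound for the analytic shears $h+\varepsilon g$ with $|\varepsilon|\le1$ — but making such an inequality sharp is precisely the $n=2$ case of the HQC Bieberbach conjecture and is expected to be of difficulty comparable to the classical Bieberbach problem for $\mathcal{S}^0_\mathcal{H}$. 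Accordingly, I would prove the proposition conditionally on Conjecture~\ref{conj00} and record the unconditional statement as the key remaining problem.
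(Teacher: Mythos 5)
Your proposal is correct and takes essentially the same route as the paper: the statement is itself a conjecture there, presented as the $n=2$ instance of Conjecture~\ref{conj00} with the harmonic $K$-quasiconformal Koebe function $f_k$ as the candidate extremal, and your unconditional verification of the attainability half (shearing of $z/(1-z)^2$ with $\omega=kz$, plus the computation $A(2,k)=(k+4)/2=\frac{5K+3}{2K+2}$) together with the conditional upper bound mirrors exactly what the paper intends, since no unconditional proof of the upper bound exists. Your added remarks — that $\mathcal{S}^0_\mathcal{H}(K)$ is not an AL family under $A_\varepsilon$ and that the bound is a theorem on the close-to-convex, starlike and typically real subclasses via \cite{lp} — are consistent with the paper and correctly identify why the full upper bound remains open.
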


By Conjecture \ref{conj01}, Nowak’s conjecture and Proposition \ref{prop1}, it is natural to state the following conjecture
associated with the sharp order of the family $\mathcal{S}_\mathcal{H}(K)$ to belong to a harmonic Hardy space $h^p$.

\begin{con}\label{conj1}
Suppose that $f=h+\overline{g}\in\mathcal{S}_\mathcal{H}(K)$. Then
$$\sup\limits_{f\in\mathcal{S}_\mathcal{H}(K)}\abs{a_2}=\frac{3K+1}{K+1},$$
and, in particular, 
we have 
$$f\in h^p\quad \left(0<p<\frac{1}{2K}\right).$$
The bound for the order is expected to be sharp.
\end{con}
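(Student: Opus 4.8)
The plan is to treat the two assertions of Conjecture~\ref{conj1} separately, since they lie at very different depths. The membership $f\in h^{p}$ for $0<p<1/(2K)$ is essentially already contained in Theorem~\ref{thmB}: a sense-preserving harmonic mapping $f=h+\overline{g}\in\mathcal S_{\mathcal H}(K)$ has complex dilatation $\mu_{f}=f_{\overline z}/f_{z}=\overline{g'}/h'$, so $\abs{\mu_{f}}=\abs{\omega}\le k=(K-1)/(K+1)$ throughout $\D$; being univalent, $f$ is therefore a genuine $K$-quasiconformal self-map of $\D$, and Theorem~\ref{thmB} gives $\sup_{0<r<1}M_{p}(r,f)<\infty$ for $0<p<1/(2K)$, which is exactly $f\in h^{p}$. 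One also records the elementary inequality $(3K+1)/(K+1)\le 2K$ for $K\ge 1$ (equivalently $(2K+1)(K-1)\ge 0$), which is what makes $1/(2K)$ the operative exponent and renders the statement consistent, through Proposition~\ref{prop1}, with the finite-mean-valency subclass $\widehat{\mathcal S_{\mathcal H}}(K)$. Thus the substantive content is the coefficient identity $\sup_{f\in\mathcal S_{\mathcal H}(K)}\abs{a_{2}}=(3K+1)/(K+1)$ together with the assertion that $1/(2K)$ cannot be improved.

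For the coefficient identity I would approach the two inequalities from opposite sides. For the lower bound, the harmonic $K$-quasiconformal Koebe function $f_{k}$ already supplies $A(2,k)=(k+4)/2=(5K+3)/(2K+2)$ from \eqref{033} --- the conjectured order of the normalized class $\mathcal S^{0}_{\mathcal H}(K)$ (Conjecture~\ref{conj01}) --- so the remaining gap to $(3K+1)/(K+1)=k+2$ is precisely $k/2$, to be produced by a (possibly limiting) affine or Koebe-type modification of $f_{k}$ that activates a nonzero $b_{1}=\omega(0)$; since the affine changes $A_{\varepsilon}$ do not preserve the dilatation bound $\abs{\omega}\le k$, pinning down the exact extremal configuration inside $\mathcal S_{\mathcal H}(K)$ is itself part of the problem. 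For the upper bound, the strategy mirrors the splitting argument of Chuaqui, Hern\'andez and Mart\'in behind Theorem~\ref{thmD}: given $f\in\mathcal S_{\mathcal H}(K)$ with $b_{1}=\omega(0)$, apply $A_{b_{1}}$ to obtain $\widetilde f\in\mathcal S^{0}_{\mathcal H}$ with second coefficient $\widetilde a_{2}$ and dilatation $\widetilde\omega=(\omega-b_{1})/(1-\overline{b_{1}}\,\omega)$, and check the exact relation $a_{2}=\widetilde a_{2}+\tfrac12\,\overline{b_{1}}\,\widetilde\omega'(0)$; then bound $\abs{\widetilde a_{2}}$ by a \emph{dilatation-constrained} sharpening of Conjecture~\ref{conj01} (here $\widetilde\omega$ is confined not just to $\overline{\D(0,\widetilde k)}$, $\widetilde k=(k+\abs{b_{1}})/(1+k\abs{b_{1}})$, but to the off-centre sub-disk that is the $A_{b_{1}}$-image of $\overline{\D(0,k)}$), bound $\abs{\widetilde\omega'(0)}$ by a Schwarz--Pick estimate for holomorphic maps into that sub-disk, and maximize the outcome over $\abs{b_{1}}\in[0,k]$, the expectation being that the critical value collapses to $k+2$. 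For the sharpness of $1/(2K)$ one would then exhibit a member of $\mathcal S_{\mathcal H}(K)$ whose radial growth matches the Astala--Koskela extremal rate, so that it fails to lie in $h^{p}$ for every $p\ge 1/(2K)$.

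The decisive obstacle is the upper bound for $\abs{a_{2}}$. Even its most transparent ingredient, the identity $\sup_{\mathcal S^{0}_{\mathcal H}(K)}\abs{a_{2}}=(5K+3)/(2K+2)$ of Conjecture~\ref{conj01}, is open: letting $K\to\infty$ it degenerates into the Clunie--Sheil-Small conjecture $\abs{a_{2}}\le 5/2$ for $\mathcal S^{0}_{\mathcal H}$, and at present only the crude bound of Theorem~\ref{thmA} is known (it yields $8$ at $K=1$, against the true value $2$). Hence this half of Conjecture~\ref{conj1} has Bieberbach-conjecture-type difficulty and cannot be settled without a real advance on the harmonic $\abs{a_{2}}$ problem; the auxiliary bound $\sup_{\mathcal S^{0}_{\mathcal H}(K)}\abs{b_{2}}=k/2$ that feeds the splitting, while more tractable, is also not yet rigorously on record. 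The sharpness statement is delicate for an independent reason: the natural candidate $f_{k}$ grows only like $\abs{1-z}^{-2}$ as $z\to 1$, so it lies in $h^{p}$ for all $p<1/2$ and therefore only witnesses $\mathcal S_{\mathcal H}(K)\not\subset h^{1/2}$, which is weaker than $\not\subset h^{1/(2K)}$ when $K>1$; a genuinely new harmonic construction attaining the quasiconformal extremal growth would be needed, and whether such a mapping even exists is precisely the crux of Pavlovi\'c's Problem~\ref{p1}.
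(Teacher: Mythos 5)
The statement you were asked about is posed in the paper as a \emph{conjecture}; the paper contains no proof of it, only the motivating chain ``Conjecture \ref{conj01} $+$ Lemma \ref{t011} $+$ Nowak's conjecture $+$ Proposition \ref{prop1}.'' Your proposal reads that situation correctly, and the parts you actually argue are sound: since $f\in\mathcal{S}_\mathcal{H}(K)$ is univalent, sense-preserving and satisfies $\abs{\mu_f}=\abs{\omega}\le k=(K-1)/(K+1)$, it is a $K$-quasiconformal mapping in $\D$ (onto $f(\D)$, not a ``self-map of $\D$'' --- a harmless slip), so Theorem \ref{thmB} gives $f\in h^p$ for $0<p<1/(2K)$ unconditionally; this is in fact more direct than the paper's own route, which reaches the same membership through Proposition \ref{prop1} and conjectural coefficient information. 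Your reconstruction of the numerology is exactly the paper's implicit one: $A(2,k)=(k+4)/2=(5K+3)/(2K+2)$ from \eqref{033}, the bound $\abs{b_2}\le k/2$ of Lemma \ref{t011}, $(5K+3)/(2K+2)+k/2=2+k=(3K+1)/(K+1)$, and the splitting identity $a_2=\widetilde a_2+\overline{b_1}\,\widetilde b_2$ is correct as you state it, as is the inequality $(3K+1)/(K+1)\le 2K$ for $K\ge1$. What remains --- the identity $\sup_{f\in\mathcal{S}_\mathcal{H}(K)}\abs{a_2}=(3K+1)/(K+1)$ and the sharpness of $1/(2K)$ --- is genuinely open, and the paper offers no argument for either; your observation that $f_k$ belongs to $h^p$ for every $p<1/2$, hence cannot witness sharpness of $1/(2K)$ when $K>1$, is correct and identifies the real crux of Pavlovi\'c's Problem \ref{p1}: one would need a harmonic $K$-quasiconformal mapping exhibiting Astala--Koskela extremal growth, while the Nowak/Das--Sairam-Kaliraj mechanism together with the conjectured coefficient bound would actually push the order above $1/(2K)$ for $K>3/2$ (at least in the finite-mean-valency subclass), so the sharpness clause is indeed the most delicate part of the statement. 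In short, there is no gap in your proposal relative to the paper, because the paper proves nothing here; your division of the conjecture into a rigorously provable membership assertion and two open extremal assertions of Bieberbach-type difficulty is the accurate reading.
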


\begin{rem}\label{r0}
{\rm If one can prove Nowak’s
conjecture and the first part of Conjecture \ref{conj1}, it means that Problem \ref{p1} (Pavlovi\'{c}'s \textit{open problem}) can be \textit{solved} completely.}
\end{rem}

In what follows, we pose the conjecture associated with \textit{radius of covering theorem} of the family $\mathcal{S}^0_\mathcal{H}(K)$.
\begin{con} 
For $0<r<1$, $\D_r:=\{z: z\in\C\ {\it and}\ \abs z<r\}$ and  $K\geq1$, let $f\in {\mathcal{S}_\mathcal{H}^0(K)}$. Then 
$$R=\sup\limits_{0<r<1}\left\{r: \D_r\subseteq f(\D),\, f\in\mathcal{S}_\mathcal{H}^0(K)\right\}\geq\frac{K+1}{6K+2}.$$
The radius is expected to be sharp.
\end{con}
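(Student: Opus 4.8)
The plan is to establish the covering inequality $R\ge (K+1)/(6K+2)$ and, for the expected sharpness, to exhibit a mapping in $\mathcal{S}_\mathcal{H}^0(K)$ that omits a point at exactly that distance from the origin. The whole scheme is the harmonic $K$-quasiconformal counterpart of the classical identity $1/4=1/(2\sup_{\mathcal{S}}|a_2|)$: indeed $(K+1)/(6K+2)=1/(2\psi(K))$, where $\psi(K):=(3K+1)/(K+1)=A(2,k)+B(2,k)$ with $k=(K-1)/(K+1)$ and $A,B$ as in \eqref{033}--\eqref{034}, and by Conjecture~\ref{conj00} (case $n=2$), together with Conjecture~\ref{conj01}, the number $\psi(K)$ is the conjectured value of $\sup_{\mathcal{S}_\mathcal{H}^0(K)}(|a_2|+|b_2|)$, attained by the harmonic $K$-quasiconformal Koebe function $f_k$. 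So $f_k$ is the expected extremal, and the proof reduces to (i) the sharp coefficient bound $|a_2|+|b_2|\le\psi(K)$, and (ii) a harmonic analogue of the implication ``covering radius $\ge 1/(2\sup|a_2|)$''.

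For (i)/sharpness I would argue that $f_k$ maps $\D$ univalently onto the slit plane $\mathbb{C}\setminus(-\infty,f_k(-1)]$. Since $h-g=z/(1-z)^2$ is the Koebe function, $\operatorname{Im}f_k(z)=\operatorname{Im}(z/(1-z)^2)$ vanishes on $\partial\D$, while $f_k(z)\to\infty$ as $z\to 1$; combining this with the Clunie--Sheil-Small shear theorem shows the image is a horizontally convex domain whose complement is a half-line on the negative real axis, and the monotonicity $\frac{d}{dr}f_k(r)=\frac{(1+r)(1+kr)}{(1-r)^3(1-kr)}>0$ on $(-1,1)$ pins the slit tip at $f_k(-1)$, so $\operatorname{dist}(0,\partial f_k(\D))=-f_k(-1)$, to be evaluated from \eqref{31}. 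At $K=1$ this gives $R=1/4$ (classical Koebe function), and as $K\to\infty$, i.e.\ $k\to 1$, it degenerates to $1/6$ (harmonic Koebe function $\mathbb{K}$, whose image omits $(-\infty,-1/6]$), so the statement interpolates these two endpoints. Should the computation show that $-f_k(-1)$ strictly exceeds $(K+1)/(6K+2)$ for $1<K<\infty$, then the covering-extremal is a different mapping with the same leading coefficients, which one must instead identify variationally --- presumably a horizontal shear of $z/(1-z)^2$ by an optimally chosen dilatation of norm $\le k$, or an affine image of such --- and verify that it is sense-preserving, $K$-quasiconformal, and has slit tip exactly $-(K+1)/(6K+2)$.

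For (ii), the covering inequality, I would transplant the second-coefficient argument. If $f=h+\overline g\in\mathcal{S}_\mathcal{H}^0(K)$ omits $w_0$ with $|w_0|<(K+1)/(6K+2)$, rotate (linear invariance) so that $w_0<0$. In the analytic case one passes to the univalent function $w_0f/(w_0-f)$ and reads off its second coefficient; here this post-composition destroys harmonicity --- precisely the obstruction behind Pavlovi\'c's Problem~\ref{p1} and the difficulty flagged in Remark~\ref{r02}. In its place the plan is: (a) control the growth of the analytic part $h$ through an affine-and-linear-invariant-family distortion estimate in the spirit of Theorem~\ref{thmD} and Sheil-Small's theory, using the finite-mean-valency hypothesis where needed; (b) via a normal-families compactness argument, reduce the infimum $R$ to mappings of $\mathcal{S}_\mathcal{H}^0(K)$ onto horizontally convex slit domains; (c) for such mappings, express the distance from $0$ to the slit tip through the shear representation and bound it below by $1/(2(|a_2|+|b_2|))$, then insert $|a_2|+|b_2|\le\psi(K)$. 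The hard part is step (b)--(c): showing that the covering-extremal configuration is a slit mapping, and that the coefficient bound converts into the covering radius without the Möbius trick, is --- up to the choice of subclass --- the same gap that keeps the $1/6$-conjecture for $\mathcal{S}_\mathcal{H}^0$ open, so a fully unconditional sharp proof is not expected here. A realistic intermediate result is to run the same scheme with the conjectural coefficient bounds replaced by the current best unconditional ones for $\mathcal{S}_\mathcal{H}^0(K)$, e.g.\ Theorem~\ref{thmA}, which yields an explicit but non-sharp covering radius of the form $c/(K+c')$.
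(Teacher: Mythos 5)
There is no proof in the paper to compare against: the statement you were given is posed there as a \emph{conjecture}, supported only by the analogy $R=1/(2\alpha)$ with the conjectured order $\alpha(\mathcal{S}_\mathcal{H}(K))=(3K+1)/(K+1)$ of Conjecture~\ref{conj1} and by the newly constructed functions $f_k$. Your observation that $(K+1)/(6K+2)=1/\bigl(2(A(2,k)+B(2,k))\bigr)=1/(2(k+2))$ correctly reverse-engineers this motivation, but your proposal is not a proof either: part (ii) is an outline resting on open conjectures (Nowak-type bounds, Conjectures~\ref{conj00}--\ref{conj1}) and on steps (b)--(c) that you yourself concede are of the same difficulty as the open $1/6$-problem for $\mathcal{S}^0_\mathcal{H}$. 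Moreover, even if one grants the conjectured order, the conversion ``order $\Rightarrow$ covering radius $1/(2\alpha)$'' (Sheil-Small's covering theorem for AL families) is not directly available here, because $\mathcal{S}_\mathcal{H}(K)$ is closed only under the \emph{strongly} affine transformations of Lemma~\ref{t012} (the parameter $\xi$ is restricted), not under all affine changes $A_\varepsilon$, so the full AL structure that theorem requires is missing; your plan does not address this.

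Your sharpness discussion also needs correction. The primary candidate you propose, the slit tip of $f_k$, does not produce the conjectured constant: since $h'+g'=(1+z)(1+kz)/\bigl((1-z)^3(1-kz)\bigr)$, one has $-f_k(-1)=\int_0^1\frac{(1-s)(1-ks)}{(1+s)^3(1+ks)}\,ds$, which evaluates with logarithmic terms and equals $1/(2(k+2))$ only at the endpoints $k=0$ and $k=1$. For example, at $K=3$ (so $k=1/2$) the omitted point of $f_{1/2}$ lies at distance $12\ln\frac{4}{3}-\frac{13}{4}\approx 0.2022$, strictly larger than $(K+1)/(6K+2)=\frac{1}{5}$. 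So the first branch of your sharpness argument fails, and you land in the fallback you flagged (``a different, variationally determined shear''), which is left entirely open. In short: relative to the paper there is nothing to match, since the paper proves nothing here; relative to the statement, your proposal is a conditional reduction together with a useful computation showing that $f_k$ itself cannot be the covering extremal, not a proof of either the inequality or its sharpness.
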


\begin{rem}\label{r3}
{\rm 
By a direct calculation, we see that $$\|S_{f_k}\|\leq\frac{19}{2}
\quad(0\leq k<1).$$}
\end{rem}







We observe that Graf \cite[Theorem 3]{g}, Hern\'{a}ndez and Mart\'{i}n \cite[Theorem~5]{hm} once showed that $\|S_f\|$ are bounded for locally univalent and univalent harmonic mappings, respectively,
but the questions about the sharp bounds are still \textit{open}. By Conjecture \ref{conj01} and Remark \ref{r3}, it is natural to pose the following conjecture:

\begin{con}\label{cc1}
Suppose that $f=h+\overline{g}\in\mathcal{S}^0_\mathcal{H}(K)$. Then $$\|S_f\|\leq\frac{19K^2+26K+3}{2(K+1)^2}.$$
The bound is expected to be sharp.
\end{con}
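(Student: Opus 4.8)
The plan is to split the statement into its two halves: showing that the bound is \emph{attained} (the easy half) and showing that it is an \emph{upper bound} for the whole class (the substantive half). For the attainment, I would first solve the defining system of the candidate extremal $f_k=h+\overline{g}$: from $h-g=z/(1-z)^2$ and $g'=kzh'$ one gets
\begin{equation*}
h'(z)=\frac{1+z}{(1-z)^3(1-kz)},
\end{equation*}
whence $P_h(z)=\dfrac{1}{1+z}+\dfrac{3}{1-z}+\dfrac{k}{1-kz}$ and $S_h=P_h'-\tfrac12P_h^2$, so that $S_h(0)=-6-4k+\tfrac12k^2$. Since $\omega(z)=kz$ vanishes at the origin, the dilatation terms in the formula for $S_{f_k}$ vanish at $z=0$, hence $S_{f_k}(0)=S_h(0)$ and
\begin{equation*}
\|S_{f_k}\|\ge\abs{S_{f_k}(0)}=6+4k-\frac{k^2}{2}=\frac{19K^2+26K+3}{2(K+1)^2},\qquad k=\frac{K-1}{K+1},
\end{equation*}
the last equality being a routine substitution. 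Consequently, \emph{once the inequality $\|S_f\|\le 6+4k-\tfrac12k^2$ is proved for every $f\in\mathcal{S}^0_\mathcal{H}(K)$, the sharpness is automatic}, so all the content lies in that upper bound. (This is consistent with Remark~\ref{r3}, since $6+4k-\tfrac12k^2<\tfrac{19}{2}$ on $[0,1)$, and with the Kraus--Nehari bound $\|S_f\|\le 6$ at $k=0$.)

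For the upper bound I would use the decomposition $S_f=S_h+T_\omega$ with
\begin{equation*}
T_\omega=\frac{\overline{\omega}}{1-\abs{\omega}^2}\left(\frac{h''}{h'}\,\omega'-\omega''\right)-\frac32\left(\frac{\omega'\,\overline{\omega}}{1-\abs{\omega}^2}\right)^2,
\end{equation*}
and estimate $\|S_f\|\le\sup_{z}\bigl(\abs{S_h(z)}+\abs{T_\omega(z)}\bigr)(1-\abs{z}^2)^2$. For $T_\omega$: since $\omega(0)=0$ and $\abs{\omega}\le k$, the Schwarz--Pick lemma applied to $\omega/k$ gives $\abs{\omega'(z)}\le\dfrac{k^2-\abs{\omega(z)}^2}{k(1-\abs{z}^2)}$, together with the corresponding second-order bound for $\omega''$; combined with a bound on $\|P_h\|$ for analytic parts of \textbf{HQC} mappings, this controls $\|T_\omega\|$. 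For $S_h$ one needs a sharp bound on $\|S_h\|$ over the analytic parts of $\mathcal{S}^0_\mathcal{H}(K)$; at $K=1$ this is precisely $\|S_h\|\le 6$, while for $K>1$ one would try to recover the right constant from the affine- and linear-invariance of $\mathcal{S}^0_\mathcal{H}(K)$ and a Theorem~\ref{thmD}-type link between the order $\alpha(\mathcal{S}^0_\mathcal{H}(K))$ and the Schwarzian norm. Adding the two estimates and maximizing in $z$ (and over the admissible pairs $(h,\omega)$) should reproduce $6+4k-\tfrac12k^2$, with the maximum located at $z=0$ and realized by the shear above.

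The hard part is twofold. First, a sharp bound for $\|S_h\|$ over $\mathcal{S}^0_\mathcal{H}(K)$ is intimately tied to the problem of $\sup_{f\in\mathcal{S}^0_\mathcal{H}(K)}\abs{a_2}$, which is open: the best available estimate, Theorem~\ref{thmA}, already overshoots badly at $K=1$ (it gives $\abs{a_2}\le 8$ against the true value $2$, see Remark~\ref{r01}), and a sharp estimate is exactly Conjecture~\ref{conj01} (cf. Remark~\ref{r02}). Second, the split $S_f=S_h+T_\omega$ is lossy because $T_\omega$ itself contains $P_h$, so the triangle inequality cannot be tight; one must instead argue that the joint supremum over $(h,\omega)$ is attained exactly at $h'(z)=(1+z)/((1-z)^3(1-kz))$, $\omega(z)=kz$, and at $z=0$, and at present there is no variational principle forcing this coincidence. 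Hence I do not expect an unconditional proof to be within reach; the realistic outcome is either a proof conditional on Conjecture~\ref{conj01} (together with the companion sharp pre-Schwarzian and Schwarzian bounds for the analytic part), or a genuinely new argument that pins down the extremal directly, for instance by coupling the affine- and linear-invariance of $\mathcal{S}^0_\mathcal{H}(K)$ with the Astala--Koskela and Theorem~\ref{thmD} machinery used elsewhere in this paper.
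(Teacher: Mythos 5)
Be aware that the statement you were asked to prove is posed in the paper as a \emph{conjecture} (Conjecture \ref{cc1}): the paper offers no proof, only motivation via Conjecture \ref{conj01} and the crude estimate $\|S_{f_k}\|\le \tfrac{19}{2}$ of Remark \ref{r3}. Measured against that, your proposal is essentially correct in both its content and its self-assessment. Your computation of the candidate extremal value is right: $h'(z)=(1+z)/\bigl((1-z)^3(1-kz)\bigr)$, $P_h(0)=4+k$, $P_h'(0)=2+k^2$, and since $\omega(0)=0$ kills the dilatation terms in $S_{f}$ at the origin, $\abs{S_{f_k}(0)}=6+4k-\tfrac12 k^2=\tfrac{19K^2+26K+3}{2(K+1)^2}$ with $k=(K-1)/(K+1)$. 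This is in fact more precise than what the paper records (Remark \ref{r3} only states the uniform bound $19/2$, which is the $k\to1$ limit of your expression), and it correctly identifies where the conjectured constant comes from. Your diagnosis of the other half is also accurate: an unconditional upper bound $\|S_f\|\le 6+4k-\tfrac12k^2$ over all of $\mathcal{S}^0_\mathcal{H}(K)$ is open, is tied to the open problem $\sup_{f\in\mathcal{S}^0_\mathcal{H}(K)}\abs{a_2}$ (Conjecture \ref{conj01}, Remarks \ref{r01} and \ref{r02}), and your proposed triangle-inequality split $S_f=S_h+T_\omega$ cannot by itself yield the sharp constant, as you note. So there is no gap \emph{relative to the paper}; you simply cannot be expected to prove what the authors themselves only conjecture.

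One concrete improvement you could make to the part that is within reach: your argument gives only $\|S_{f_k}\|\ge 6+4k-\tfrac12k^2$ (the value at $z=0$). For the sharpness claim to be internally consistent one also wants $\sup_{z\in\D}\abs{S_{f_k}(z)}(1-\abs{z}^2)^2$ to be \emph{attained} at the origin, i.e. $\|S_{f_k}\|=6+4k-\tfrac12k^2$ exactly; otherwise the explicit function $f_k$ would itself violate the conjectured upper bound. Since $f_k$ is completely explicit ($S_h$ is a rational function and $\omega=kz$), this is a finite, checkable computation that would both sharpen Remark \ref{r3} and confirm that the conjectured constant is the correct candidate, and it would strengthen your write-up without touching the genuinely open part.
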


\vskip.20in
\section{Proof of Theorem \ref{t1}}

We start by noting the following variant of the classical Schwarz lemma.

\begin{lemma} {\rm (\cite{b})}\label{t111}
Suppose that $0<k\leq1$ and $\omega$ is analytic in $\D$ with $$\omega(0)=0\ \textit{and}\ \abs{\omega(z)}\leq k.$$ Then
 $$\abs{\omega(z)}\leq k\abs{z}\ \textit{and}\ \abs{\omega'(0)}\leq k.$$ Equality holds for the functions $$\omega(z)=k\,e^{i\theta}z\quad(\theta\in[0,2\pi)).$$
\end{lemma}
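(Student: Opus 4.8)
The plan is to reduce Lemma \ref{t111} to the classical Schwarz lemma by a straightforward rescaling. First I would set $\psi(z):=\omega(z)/k$ for $z\in\D$. Since $\omega$ is analytic in $\D$ with $\omega(0)=0$ and $\abs{\omega(z)}\leq k$ throughout the disk, the function $\psi$ is analytic in $\D$, satisfies $\psi(0)=0$, and obeys $\abs{\psi(z)}\leq 1$ on $\D$.

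Next I would invoke the Schwarz lemma for $\psi$. If $\psi$ is constant, then $\psi\equiv 0$ and all the claimed inequalities are trivial; otherwise the open mapping theorem forces $\abs{\psi(z)}<1$ for every $z\in\D$, so $\psi$ maps $\D$ into itself and fixes the origin, and the classical Schwarz lemma gives $\abs{\psi(z)}\leq\abs{z}$ for all $z\in\D$ together with $\abs{\psi'(0)}\leq 1$. (Equivalently, and avoiding any case distinction, one can note that $z\mapsto \psi(z)/z$, assigned the value $\psi'(0)$ at the origin, extends to an analytic function on $\D$; applying the maximum modulus principle on the circle $\abs{z}=r$ and letting $r\to 1^{-}$ yields $\abs{\psi(z)/z}\leq 1$, which is the same pair of estimates.) Multiplying both inequalities by $k$ returns $\abs{\omega(z)}\leq k\abs{z}$ and $\abs{\omega'(0)}\leq k$, as required.

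For the sharpness claim I would simply substitute $\omega(z)=k\,e^{i\theta}z$ with $\theta\in[0,2\pi)$: this function is analytic in $\D$, vanishes at the origin, satisfies $\abs{\omega(z)}=k\abs{z}\leq k$, and attains equality both in $\abs{\omega(z)}=k\abs{z}$ and in $\abs{\omega'(0)}=\abs{k\,e^{i\theta}}=k$.

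I do not expect any genuine obstacle here, since the statement is essentially a cosmetic rescaling of Schwarz's lemma. The only point deserving a word of care is that the hypothesis only provides $\abs{\omega}\leq k$ on the \emph{open} disk, so that $\psi$ a priori maps into the \emph{closed} unit disk; this is disposed of either by the open mapping theorem or by the maximum modulus argument indicated above, after which the standard form of the lemma applies verbatim.
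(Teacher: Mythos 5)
Your proof is correct: the rescaling $\psi=\omega/k$ (legitimate since $k>0$), the maximum-modulus treatment of $\psi(z)/z$ to handle the fact that the hypothesis only gives $\abs{\psi}\leq 1$ on the open disk, and the direct verification of equality for $\omega(z)=k\,e^{i\theta}z$ together constitute the standard proof of this statement. The paper itself gives no proof — it cites the lemma from Borovikov as a known variant of the Schwarz lemma — and your argument is exactly the routine rescaling one would expect there, so there is nothing to reconcile.
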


Now, we provide the sharp upper bound of $\abs{b_2}$ for the class $\mathcal{S}^0_\mathcal{H}(K)$, which will play a crucial role in determining the sharp upper bound of $\abs{a_2}$ of the family of harmonic $K$-quasiconformal mappings with bounded Schwarzian norm.

\begin{lemma}\label{t011}
Suppose that $f=h+\overline{g}\in\mathcal{S}^0_\mathcal{H}(K)$, where $$h(z)=z +\sum_{n=2}^{\infty}a_n z^n\ {\textit and} \ g(z)=\sum_{n=2}^{\infty}b_n z^n.$$ Then the sharp inequality \begin{equation}\label{22}\abs{b_2}\leq \frac{K-1}{2(K+1)}\end{equation}
holds.
\end{lemma}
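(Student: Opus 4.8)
The plan is to extract the coefficient bound directly from the dilatation constraint, using the fact that for $f = h + \overline{g} \in \mathcal{S}^0_{\mathcal{H}}(K)$ the dilatation $\omega = g'/h'$ is analytic in $\D$ with $\omega(0) = 0$ (the latter because $f \in \mathcal{S}^0_{\mathcal{H}}$ forces $g'(0) = 0$) and $|\omega| \le k = (K-1)/(K+1)$. First I would write out the power series: $h'(z) = 1 + 2a_2 z + \cdots$ and $g'(z) = 2b_2 z + \cdots$, so that the relation $g' = \omega h'$ gives, upon comparing the coefficient of $z^1$ on both sides, $2b_2 = \omega'(0) \cdot 1$, i.e. $b_2 = \tfrac12 \omega'(0)$. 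Then Lemma \ref{t111} (the Schwarz lemma variant) applied to $\omega$ yields $|\omega'(0)| \le k$, hence $|b_2| \le k/2 = \tfrac{K-1}{2(K+1)}$, which is the claimed inequality.

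For sharpness, I would exhibit an explicit extremal mapping realizing equality. The natural candidate is the harmonic $K$-quasiconformal Koebe function $f_k$ constructed in Section \ref{s3}, or more simply any $f \in \mathcal{S}^0_{\mathcal{H}}(K)$ with dilatation $\omega(z) = kz$; from \eqref{034} with $n = 2$ one reads off $B(2,k) = \tfrac{k(1-k)^2 \cdot 4 - 2k(1-k)\cdot 2 + k(1+k)(1-k^2)}{(1-k)^3 \cdot 2}$, and I would check that this simplifies to exactly $k/2$. Equivalently, since $b_2 = \tfrac12\omega'(0)$ and $\omega(z) = kz$ gives $\omega'(0) = k$, equality holds as soon as one knows such an $f$ lies in $\mathcal{S}^0_{\mathcal{H}}(K)$ — and this is guaranteed because $h - g = z/(1-z)^2$ together with $\omega = kz$ produces a sense-preserving univalent harmonic map (this is precisely the shearing construction, and univalence follows from the Clunie–Sheil-Small shearing theorem since $z/(1-z)^2$ is convex in the direction of the real axis).

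The only genuine subtlety — and the step I would treat most carefully — is the normalization bookkeeping: one must confirm that $f \in \mathcal{S}^0_{\mathcal{H}}$ really does force $g'(0) = 0$ (so that $\omega(0) = 0$ and the Schwarz lemma applies), and that the indexing in the series \eqref{111} is consistent with $g$ starting at $n = 2$ in the statement of the lemma. Everything else is a one-line coefficient comparison plus an invocation of Lemma \ref{t111}; there is no real analytic obstacle here, so the proof is short. I would close by remarking that this $b_2$ bound is exactly the quantity $\tfrac12\sup_{f \in \mathcal{F}^0_\lambda}|g''(0)|$ appearing in Theorem \ref{thmD}, which is why it feeds directly into the computation of $\varphi(K,\lambda)$ in Theorem \ref{t1}.
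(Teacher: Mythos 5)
Your proof is correct and follows essentially the same route as the paper: both apply the Schwarz-lemma variant (Lemma \ref{t111}) to the dilatation $\omega=g'/h'$, which vanishes at the origin since $g'(0)=0$, and read off $|2b_2|=|\omega'(0)|\le k=(K-1)/(K+1)$ from the coefficient of $z$ in $g'=\omega h'$. Your sharpness discussion is in fact slightly more complete than the paper's, which merely names the extremal dilatations $\omega(z)=k e^{i\theta}z$, whereas you verify via the shearing construction and $B(2,k)=k/2$ that a genuine member of $\mathcal{S}^0_{\mathcal{H}}(K)$ attains equality.
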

\begin{proof}
For $f=h+\overline{g}\in\mathcal{S}^0_\mathcal{H}(K)$,
we see that its analytic dilatation $$\omega=\frac{g'}{h'}$$ satisfies the condition $$\abs{\omega(z)}\leq k\quad (0\leq k<1).$$
Moreover, if $f\in\mathcal{S}^0_\mathcal{H}(K)$, we know that $g'(0)=0$, i.e., $\omega(0)=0$. Thus, the assertion of
Lemma \ref{t111} shows that $$\abs{\omega'(0)}\leq k\quad (0\leq k<1).$$ By \cite[p. 87, Sect. 5.4]{d}, we get
 \begin{align*}
 \begin{split}\omega(z)&=\frac{g'(z)}{h'(z)}
 =\frac{2b_2z+3b_3z^2+\cdots}{1+2a_2z+\cdots}
=2b_2z+(3b_3-4a_2b_2)z^2+\cdots,
 \end{split}
\end{align*}
therefore, $$\abs{\omega'(0)}=\abs{2b_2}\leq k=\frac{K-1}{K+1},$$ which implies that the desired assertion of Lemma \ref{t011} is true. Equality holds for the functions $$\omega(z)=k\,e^{i\theta}z\quad\left(0\leq k<1;\, \theta\in[0,2\pi)\right).$$
This completes the proof of Lemma \ref{t011}.
\end{proof}

In what follows, we introduce the \textit{strongly affine transformation} for mappings \( f = h + \overline{g} \in \mathcal{S}_{\mathcal{H}}(K) \), defined by
\[
\widetilde{A}_\xi(f)(z) := \frac{f(z) - \overline{\xi f(z)}}{1 - \overline{\xi} g'(0)},
\]
where the parameter \( \xi \) satisfies
\begin{equation}
\label{111}
|\xi| \leq \eta \quad \left(0\leq\eta< 1\right).
\end{equation}

 A family $\mathcal{\widetilde{F}}$ is said to be a \textit{strongly affine and linear invariant family}
(SAL family) if it is closed under both the Koebe transform and strongly affine transformation.

\begin{lemma}\label{t012}
The families \( \mathcal{S}_{\mathcal H}(K) \) and \({\mathcal{S}_\mathcal{H}}(K,\lambda)\) belong to $\mathcal{\widetilde{F}}$ of strongly affine and linear invariant family.
\end{lemma}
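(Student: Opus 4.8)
The plan is to verify directly that both families are preserved under the two defining operations of an SAL family: the Koebe transform $K_\zeta$ and the strongly affine transformation $\widetilde{A}_\xi$. The key observation is that $\widetilde{A}_\xi$ is, formally, an ordinary affine change $A_\varepsilon$ with $\varepsilon = \xi$; the \emph{only} difference is the admissible range of the parameter, which is restricted to $|\xi|\le\eta<1$ in the strongly affine case rather than merely $|\varepsilon|<1$. Hence most of the work reduces to recalling two facts that are already available from the affine and linear invariant theory of Sheil-Small \cite{s}: (i) the ordinary class $\mathcal{S}_{\mathcal H}$ is affine and linear invariant, so in particular it is closed under $A_\varepsilon$ and $K_\zeta$; (ii) the dilatation $\omega_f = g'/h'$ transforms in a controlled way under both operations.

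First I would treat the Koebe transform. Writing $\varphi_\zeta(z) = (z+\zeta)/(1+\bar\zeta z)$ for the disk automorphism, a direct computation (cf. \cite{s}) shows that if $F = K_\zeta(f)$ with $f = h+\overline g$, then the dilatation of $F$ is
\[
\omega_F(z) \;=\; \omega_f\!\left(\varphi_\zeta(z)\right)\cdot \frac{\overline{\varphi_\zeta'(z)}}{\varphi_\zeta'(z)}.
\]
Since $|\overline{\varphi_\zeta'(z)}/\varphi_\zeta'(z)| = 1$, we get $|\omega_F(z)| = |\omega_f(\varphi_\zeta(z))|$ for all $z\in\D$; as $\varphi_\zeta$ maps $\D$ onto $\D$, the bound $|\omega_f|\le k$ in $\D$ is inherited verbatim by $\omega_F$. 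Univalence and sense-preservation are preserved because $K_\zeta$ is a composition with a disk automorphism followed by an affine normalization, and the normalization $F(0) = F_z(0)-1 = 0$ holds by construction of $K_\zeta$. Thus $f\in\mathcal{S}_{\mathcal H}(K)$ implies $K_\zeta(f)\in\mathcal{S}_{\mathcal H}(K)$.

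Next, the strongly affine transformation. Set $F = \widetilde{A}_\xi(f)$. Writing $f = h+\overline g$ one checks that $F = H + \overline G$ with
\[
H = \frac{h - \overline{\xi}\,g}{1 - \overline\xi\, g'(0)}, \qquad
G = \frac{g - \overline{\xi}\, h}{1 - \overline\xi\, g'(0)},
\]
so that $H' - \overline\xi\, \overline{G'}$ rearranges to give the new dilatation
\[
\omega_F \;=\; \frac{G'}{H'} \;=\; \frac{g' - \overline\xi\, h'}{h' - \overline\xi\, g'} \;=\; \frac{\omega_f - \overline\xi}{1 - \overline\xi\,\omega_f},
\]
which is a Möbius self-map of $\overline{\D}$ applied to $\omega_f$. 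Since $|\omega_f|\le k$ in $\D$, the image $\omega_F$ takes values in the closed disk of center $-\overline\xi(1-k^2)/(1-k^2|\xi|^2)$ and radius $k(1-|\xi|^2)/(1-k^2|\xi|^2)$; a short estimate shows this disk lies inside $\{|w|\le (k+|\xi|)/(1+k|\xi|)\}$. The subtle point — and I expect this to be the main obstacle — is that $(k+|\xi|)/(1+k|\xi|)$ can exceed $k$, so the transformed mapping need not have dilatation bound $k$ any more; it has a \emph{larger} bound corresponding to some $K' \ge K$. Consequently the statement must be read as: $\mathcal{S}_{\mathcal H}(K)$ and $\mathcal{S}_{\mathcal H}(K,\lambda)$ are SAL families in the sense that the SAL operations keep us \emph{within the union} $\bigcup_{K\ge 1}\mathcal{S}_{\mathcal H}(K)$, i.e. within the class of harmonic quasiconformal mappings, with the quasiconformality constant allowed to grow; equivalently, one restricts $\eta$ in \eqref{111} so that $(k+\eta)/(1+k\eta)<1$, which holds automatically for any $\eta<1$, and interprets "invariant" accordingly. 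I would make this precise, then note that $F$ is univalent and sense-preserving (being $f$ post-composed with an affine map of $\C$ of the form $w\mapsto (w-\overline\xi\, \overline w)/(1-\overline\xi g'(0))$, which is a homeomorphism of $\C$ since $|\xi|<1$), and that the normalization $F(0)=0$, $F_z(0)=1$, holds by the choice of denominator $1-\overline\xi g'(0)$.

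Finally, for the family $\mathcal{S}_{\mathcal H}(K,\lambda)$ one must additionally check that the Schwarzian norm bound $\|S_f\|\le\lambda$ is preserved. For the Koebe transform this is immediate from the Möbius invariance of the harmonic Schwarzian: $S_{K_\zeta(f)}(z) = S_f(\varphi_\zeta(z))\,\varphi_\zeta'(z)^2$, so $|S_{K_\zeta(f)}(z)|(1-|z|^2)^2 = |S_f(\varphi_\zeta(z))|(1-|\varphi_\zeta(z)|^2)^2 \le \lambda$ using $(1-|z|^2)|\varphi_\zeta'(z)| = 1-|\varphi_\zeta(z)|^2$. For the strongly affine transformation, I would invoke the affine invariance of the Schwarzian derivative established by Hernández and Martín \cite{hm}: $S_{\widetilde A_\xi(f)} = S_f$, since affine changes do not alter $J_f$ up to a positive constant factor and the logarithmic derivatives in the definition of $S_f$ are unaffected. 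Hence $\|S_{\widetilde A_\xi(f)}\| = \|S_f\|\le\lambda$. Assembling these four verifications — Koebe/dilatation, strongly affine/dilatation, Koebe/Schwarzian, strongly affine/Schwarzian — yields the claim. The only genuinely delicate issue, as flagged, is the bookkeeping of the dilatation bound under $\widetilde A_\xi$, which is why the parameter range \eqref{111} is imposed with a strict bound $\eta<1$.
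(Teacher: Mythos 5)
Your verifications for the Koebe transform (the dilatation is unchanged in modulus under precomposition with a disk automorphism) and for the Schwarzian norm under both operations are correct, and indeed more detailed than the paper, which settles linear invariance by a citation. The genuine gap is in how you treat the strongly affine transformation. You correctly observe that the new dilatation has modulus $\bigl|\frac{\omega-\xi}{1-\overline{\xi}\omega}\bigr|$, bounded in general only by $(k+|\xi|)/(1+k|\xi|)$, which may exceed $k$; but you then resolve this by reinterpreting the lemma as invariance of the union $\bigcup_{K'\geq K}\mathcal{S}_\mathcal{H}(K')$, with the quasiconformality constant allowed to grow. That is not what the lemma asserts, and the weakened version would not serve its later purpose: Proposition \ref{c1} applies the argument of Theorem \ref{thmD} to $\widehat{\mathcal{S}_\mathcal{H}}(K,\lambda)$ and needs the affine-type operation to stay inside the \emph{same} class, so that $\tfrac12\sup|g''(0)|=\sup|b_2|\leq\frac{K-1}{2(K+1)}$ from Lemma \ref{t011} can be inserted into the order formula, yielding \eqref{1}; if $K$ may increase under the operation, that bound is lost.

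The whole point of the \emph{strongly} affine transformation, and the content of the paper's proof, is that $\xi$ is not free up to $|\xi|<1$: for the given $f$ it is restricted by $|\xi|\leq\frac{k-|\omega(z)|}{1-k|\omega(z)|}<1$ (this is the role of the bound $\eta<1$ in the definition). Under this restriction one has
\[
\left|\frac{\omega(z)-\xi}{1-\overline{\xi}\,\omega(z)}\right|\leq\frac{|\omega(z)|+|\xi|}{1+|\omega(z)|\,|\xi|}\leq k,
\]
so the transformed mapping has dilatation bounded by the same $k$, and $\mathcal{S}_\mathcal{H}(K)$, resp.\ $\mathcal{S}_\mathcal{H}(K,\lambda)$ (combining this with your affine invariance of $S_f$), is closed under $\widetilde{A}_\xi$ with the same $K$. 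So the fix is not to weaken the conclusion but to invoke the $k$-dependent admissible range of $\xi$; with that replacement your computation of $\omega_F$ gives exactly the paper's argument. (A minor slip: with the paper's normalization the new dilatation is $\frac{\omega-\xi}{1-\overline{\xi}\omega}$ rather than $\frac{\omega-\overline{\xi}}{1-\overline{\xi}\omega}$, but only moduli are used, so this does not affect the estimate.)
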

\begin{proof}
It easily to check that the family \( \mathcal{S}_{\mathcal H}(K) \) (resp. \({\mathcal{S}_\mathcal{H}}(K,\lambda)\)) is a linear invariant family (see \cite[p. 9]{DR}).
Let $\widetilde{A}_{\xi}(f)(z)=H(z)+\overline{G}(z)$, and denote its complex dilatation by $$\widetilde{\omega}(z)=\frac{G'(z)}{H'(z)}.$$

Note that 
the constraint \[|\xi| \leq \frac{k - |\omega(z)|}{1 - k|\omega(z)|}<1 \quad \left(|\omega(z)| = \left|\frac{g'}{h'}\right| \leq k; \ 0\leq k < 1\right)\] on \(\xi\) is equivalent to
\[\left| \frac{\omega(z) - \xi}{1 - \overline{\xi} \omega(z)} \right| \leq k < 1.\] In conjunction with the definition of \(\widetilde{\omega}(z)\), we have \[|\widetilde{\omega}(z)| = \left| \frac{G'(z)}{H'(z)} \right| = \left| \frac{g' - \xi h'}{h' - \overline{\xi} g'} \right| = \left| \frac{\omega(z) - \xi}{1 - \overline{\xi} \omega(z)} \right| \leq k < 1,\]
which shows that the family \( \mathcal{S}_\mathcal{H}(K) \) (resp. \({\mathcal{S}_\mathcal{H}}(K,\lambda)\)) is a strongly affine invariant family. 
Thus, we deduce that the families \( \mathcal{S}_\mathcal{H}(K) \) and \({\mathcal{S}_\mathcal{H}}(K,\lambda)\) are {strongly affine and linear invariant families}. 
\end{proof}

We now turn our attention to the following result involving the order of the family \(\widehat{\mathcal{S}_\mathcal{H}}(K,\lambda)\).

\begin{prop} \label{c1}
Let $f\in\widehat{\mathcal{S}_\mathcal{H}}(K,\lambda)$. 
Then
\begin{align*}
\sup\limits_{f\in\widehat{\mathcal{S}_\mathcal{H}}(K,\lambda)}\abs{a_2}=\sqrt{1+\frac{\lambda}{2}+\frac{1}{2}\left(\frac{K-1}{K+1}\right)^2}+\frac{K-1}{2\left({K+1}\right)}. 
\end{align*}
\end{prop}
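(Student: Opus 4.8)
The plan is to identify $\widehat{\mathcal{S}_\mathcal{H}}(K,\lambda)$ as (a subfamily related to) the affine and linear invariant family $\mathcal{F}_\lambda$ of Chuaqui, Hern\'andez, and Mart\'in, and then apply their Theorem~\ref{thmD} together with the sharp bound on $|b_2|$ from Lemma~\ref{t011}. First I would observe, via Lemma~\ref{t012}, that $\mathcal{S}_\mathcal{H}(K,\lambda)$ — and hence $\widehat{\mathcal{S}_\mathcal{H}}(K,\lambda)$ — is a strongly affine and linear invariant family, so in particular it is an AL family in the sense of Sheil-Small, and its order is $\alpha = \sup |a_2|$ as in the definition recalled in the introduction. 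The key point is that Theorem~\ref{thmD} expresses $\alpha(\mathcal{F}_\lambda)$ in terms of $\sup_{f\in\mathcal{F}_\lambda^0}|g''(0)|$, i.e. in terms of $\sup 2|b_2|$ over the associated family with $\omega(0)=0$; I would verify that the normalization defining $\mathcal{S}_\mathcal{H}^0(K,\lambda)$ is exactly the ``$0$-subfamily'' needed, so that the relevant quantity is $\sup_{f\in\widehat{\mathcal{S}_\mathcal{H}}(K,\lambda)\cap\mathcal{S}_\mathcal{H}^0}2|b_2|$.

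Next I would compute that supremum. By Lemma~\ref{t011}, every $f\in\mathcal{S}_\mathcal{H}^0(K)$ satisfies $|b_2|\le \tfrac{K-1}{2(K+1)}$, and the bound is attained by the dilatation $\omega(z)=kz$; the harmonic $K$-quasiconformal Koebe function $f_k$ constructed in Section~\ref{s3} has exactly this dilatation and, by Remark~\ref{r3}, satisfies $\|S_{f_k}\|\le \tfrac{19}{2}$, so $f_k\in\widehat{\mathcal{S}_\mathcal{H}}(K,\lambda)$ for $\lambda\ge \tfrac{19}{2}$ (and in any case the extremal-for-$|b_2|$ configuration is compatible with a suitable Schwarzian bound, since the $b_2$-extremal only constrains $\omega'(0)$). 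Hence $\sup 2|b_2| = \tfrac{K-1}{K+1} = 2k$. Substituting $\sup|g''(0)|=2|b_2|_{\max}=\tfrac{K-1}{K+1}$ into the formula of Theorem~\ref{thmD} gives
\begin{align*}
\alpha\bigl(\widehat{\mathcal{S}_\mathcal{H}}(K,\lambda)\bigr)
=\sqrt{1+\frac{\lambda}{2}+\frac{1}{2}\left(\frac{K-1}{K+1}\right)^2}+\frac{K-1}{2(K+1)},
\end{align*}
which is exactly the claimed identity. I would also check that the ``$\le$'' direction does not require the full strength of $\mathcal{F}_\lambda$: since $\widehat{\mathcal{S}_\mathcal{H}}(K,\lambda)\subseteq\mathcal{S}_\mathcal{H}(K,\lambda)$ is a SAL family with Schwarzian bound $\lambda$ and dilatation bound $k$, Theorem~\ref{thmD} applied with the restricted value $\sup|g''(0)|\le \tfrac{K-1}{K+1}$ furnishes the upper bound, while the lower bound follows from exhibiting an explicit extremal mapping.

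For sharpness I would exhibit the extremal function directly. The natural candidate is a Koebe-transform/affine-rotation composite built from the analytic Koebe function sheared with dilatation $\omega(z)=kz$ — essentially the $f_k$ of Section~\ref{s3}, or rather the member of the SAL family generated by it that is ``rotated'' to maximize $|a_2|$: one takes the $b_2$-extremal dilatation to align its phase with the Koebe-transform contribution, exactly as in the proof of Theorem~\ref{thmD}. Computing $a_2$ for this mapping reproduces the right-hand side, establishing that the supremum is attained (or approached). The main obstacle I anticipate is the bookkeeping in the reduction to Theorem~\ref{thmD}: one must check carefully that the ``$0$-subfamily'' $\mathcal{F}_\lambda^0$ in their statement corresponds under our normalizations to $\widehat{\mathcal{S}_\mathcal{H}}(K,\lambda)\cap\mathcal{S}_\mathcal{H}^0$ and that the Schwarzian norm is invariant under the strongly affine transformation $\widetilde{A}_\xi$ and the Koebe transform (so that the family really is SAL with the same $\lambda$) — this invariance is the crux, since without it the supremum of $|b_2|$ over the $0$-subfamily need not feed correctly into the order formula. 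Once that invariance is in hand, the remaining computation is routine algebra.
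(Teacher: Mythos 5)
Your proposal follows essentially the same route as the paper's own proof: it combines the sharp bound $\abs{b_2}\le \frac{K-1}{2(K+1)}$ of Lemma \ref{t011}, the strongly affine and linear invariance of Lemma \ref{t012}, and the order formula of Theorem \ref{thmD}, with the harmonic $K$-quasiconformal Koebe function $f_k$ (Remark \ref{r3}) as the extremal candidate. The verifications you flag (that the formula of Theorem \ref{thmD} persists for the strongly affine and linear invariant family, and that the $0$-subfamily normalization matches) are exactly the steps the paper likewise delegates to ``the similar method as in Theorem \ref{thmD}'', so your sketch is at the same level of detail as the published argument.
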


\begin{proof} 
By harmonic quasiconformal Koebe function $f_k$ and Remark \ref{r3}, 
we see that $f_k\in \widehat{\mathcal{S}_\mathcal{H}}(K,\lambda)$, and  
inequality \eqref{22} also holds for this family. Note that Theorem \ref{thmD} remains valid for the strongly family \(\widehat{\mathcal{S}_\mathcal{H}}(K,\lambda)\).
By Lemmas \ref{t011} and \ref{t012}, and using the similar method as in Theorem \ref{thmD},
 we get the assertion of Proposition \ref{c1}.
\end{proof}



In what follows, we are ready to give the proof of Theorem \ref{t1}. 
\begin{proof}[Proof of Theorem \ref{t1}]
If $f\in\widehat{\mathcal{S}_\mathcal{H}}(K,\lambda)$, 
it follows from Proposition \ref{c1} that
$$\sup\limits_{f\in\widehat{\mathcal{S}_\mathcal{H}}(K,\lambda)}\abs{a_2}=\sqrt{1+\frac{\lambda}{2}+\frac{1}{2}\left(\frac{K-1}{K+1}\right)^2}+\frac{K-1}{2\left({K+1}\right)}\geq1.$$
For convenience, we define a real function $\Phi(K)$ with one parameter $\lambda\, (\lambda\geq0)$ as follows: \begin{equation}\label{33}\Phi(K):=\sqrt{1+\frac{\lambda}{2}+\frac{1}{2}\left(\frac{K-1}{K+1}\right)^2}+\frac{K-1}{2\left({K+1}\right)}-2K\quad(K\geq1).\end{equation}

Now, we divide the proof into three cases.
\vskip.05in
(i) When $0\leq\lambda\leq6$ and $K\geq1$, we know that $\Phi(K)$ is a decreasing continuous function with respect to $K$
in $[1,+\infty)$,
and $$\Phi(1)=\sqrt{1+\frac{\lambda}{2}}-2\leq0,$$ which implies that $$\Phi(K)\leq0\quad(K\geq1).$$ Thus, we have
\begin{equation}\label{3}\varphi(K,\lambda)=\sqrt{1+\frac{\lambda}{2}+\frac{1}{2}\left(\frac{K-1}{K+1}\right)^2}+\frac{K-1}{2\left({K+1}\right)}\leq 2K,\end{equation}
that is, \begin{equation}\label{4}\frac{1}{\varphi(K,\lambda)}\geq\frac{1}{2K}.\end{equation}
By Proposition \ref{prop1},
it shows that $$f\in h^p\quad \left(0<p<\frac{1}{2K}\right).$$
\vskip.05in
(ii) When $\lambda>6$, we know that $$\Phi'(K)<0\quad (K\geq1).$$ Furthermore, we find that $$\Phi(1)=\sqrt{1+\frac{\lambda}{2}}-2>0$$
and $$\Phi\left(\lambda\right)<0.$$ Applying the zero point theorem to the real continuous function $\Phi(K)$ in bounded closed interval
$[1,\, \lambda]$, we deduce that the equation \begin{equation*}\sqrt{1+\frac{\lambda}{2}+\frac{1}{2}\left(\frac{K-1}{K+1}\right)^2}+\frac{K-1}{2\left({K+1}\right)}=2K\quad(K\geq1;\, \lambda>6),\end{equation*} 
which is equivalent to the equation \eqref{2}, exists a unique solution $K_1$ in $(1,+\infty)$.
If $K\geq K_1>1$, we know that
 \eqref{4} also holds. By Proposition \ref{prop1}, it implies that $$f\in h^p\quad \left(0<p<\frac{1}{2K}\right).$$
\vskip.05in
(iii) When $\lambda>6$ and $1\leq K<K_1$, we see that
\begin{equation*}\label{5}\varphi(K,\lambda)=\sqrt{1+\frac{\lambda}{2}+\frac{1}{2}\left(\frac{K-1}{K+1}\right)^2}+\frac{K-1}{2\left({K+1}\right)}>2K,\end{equation*}
or equivalently, $$\frac{1}{\varphi(K,\lambda)}<\frac{1}{2K}.$$ By Proposition \ref{prop1},
it means that $$f\in h^p\quad \left(0<p<\frac{1}{\varphi(K,\lambda)}\right).$$
The proof of Theorem \ref{t1} is thus completed.
\end{proof}

\vskip .10in
	\noindent{\bf  Acknowledgements.}
The authors 
thank 
Daoud Bshouty, 
David Kalaj, 
Gang Liu,
Zhi-Hong Liu, Saminathan Ponnusamy, 
Toshiyuki Sugawa,  
Yu-Dong Wu, 
and 
 Jian-Feng Zhu
for their helpful conversations and useful comments in different stages of the preparation of this paper.

\vskip .10in
	\noindent{\bf  Funding.}
Z.-G. Wang was partially supported by the \textit{Key Project of Education Department of Hunan Province} under Grant no. 25A0668, and
the \textit{Natural Science Foundation of Changsha} under Grant no. kq2502003
of the P. R. China. X.-Y. Wang was partially supported by the \textit{National Scholarship Council of China} under Grant
no. 202306840137 of the P. R. China.
A. Rasila was partially supported by \textit{Natural Science Foundation of Guangdong Province} under Grant no. 2024A1515010467 of the P. R. China, and \textit{Li Ka Shing Foundation} under Grant no. 2024LKSFG06. 
J.-L. Qiu was partially supported by the \textit{Graduate Research Innovation Project of Hunan Province} under Grant no. CX20240803 of the P. R. China.


\vskip .10in
\noindent{\bf Conflicts of interest.} The authors declare that they have no conflict of interest.

\vskip .10in
\noindent{\bf Data availability statement.}  Data sharing is not applicable to this article as no datasets were generated or analysed during the current study.
\vskip.05in


\begin{thebibliography}{99}
\bibitem{aap}
{\small Y. Abu-Muhanna, R. M. Ali, and S. Ponnusamy, The spherical metric and univalent harmonic mappings,
\textit{Monatsh. Math.} \textbf{188} (2019), 703--716.}








\vskip.05in
\bibitem{al}
{\small Y. Abu-Muhanna and A. Lyzzaik, The boundary behavior of harmonic univalent maps, \textit{Pacific J. Math.} \textbf{141} (1990), 1--20.}


\vskip.05in
\bibitem{am}
{\small A. Aleman and M. J. Mart\'{\i}n, Convex harmonic mappings are not necessarily in $h^{1/2}$, \textit{Proc. Amer. Math. Soc.} \textbf{143} (2015), 755--763.}









\vskip.05in
\bibitem{ak}
{\small K. Astala and P. Koskela, $H^p$-theory for quasiconformal mappings, \textit{Pure Appl. Math. Q.} \textbf{7} (2011), 19--50.}








\vskip.05in
\bibitem{b}
{\small M. Borovikov, On Koebe radius and coefficients estimate for univalent harmonic mappings, 
\textit{J. Anal.} \textbf{33} (2025), 2275--2284.}










\vskip.05in
\bibitem{cp}
{\small S. Chen and S. Ponnusamy, Koebe type theorems and pre-Schwarzian of harmonic $K$-quasiconformal mappings, and their applications, \textit{Acta Math. Sin. (Engl. Ser.)} \textbf{38} (2022), 1965--1980.}


\vskip.05in
\bibitem{cdo}
{\small M. Chuaqui, P. Duren, and B. Osgood, The Schwarzian derivative for harmonic mappings, \textit{J. Anal. Math.} \textbf{91} (2003), 329--351.}


\vskip.05in
\bibitem{chm}
{\small M. Chuaqui, R. Hern\'{a}ndez, and M. J. Mart\'{i}n, Affine and linear invariant families of harmonic mappings, \textit{Math. Ann.} \textbf{367} (2017), 1099--1122.}

\vskip.05in
\bibitem{cs}
{\small J. G. Clunie and T. Sheil-Small, Harmonic univalent functions, \textit{Ann. Acad. Sci. Fenn. Ser. A. I Math.} \textbf{9} (1984), 3--25.}


\vskip.05in
\bibitem{ds}
{\small S. Das and A. Sairam Kaliraj, Integral mean estimates for univalent and locally univalent harmonic mappings,
\textit{Canad. Math. Bull.} \textbf{67} (2024), 655--669.}


\vskip.05in
\bibitem{de}
{\small L. de Branges, A proof of the Bieberbach conjecture, \textit{Acta Math.} \textbf{154} (1985), 137--152.}


\vskip.05in
\bibitem{dhr}
{\small S. Das, J. Huang, and A. Rasila, Hardy spaces of harmonic quasiconformal mappings and Baernstein's theorem, \textit{Bull. Sci. Math.} \textbf{208} (2026), 103789.}

\vskip.05in
\bibitem{DR} 
{\small S. Das, A.Rasila,
On harmonic quasiregular mappings in Bergman spaces,
\textit{Potential Anal.} \textbf{64} (2026), 26.}


\vskip.05in
\bibitem{du}
{\small P. L. Duren, \textit{Univalent functions}, Grundlehren der Mathematischen Wissenschaften, Vol. 259.
Springer, New York, 1983.}

\vskip.05in
\bibitem{d}
{\small P. Duren, \textit{Harmonic mappings in the plane}, Cambridge University
Press, Cambridge, 2004.}






\vskip.05in
\bibitem{g}
{\small S. Y. Graf, On the Schwarzian norm of harmonic mappings, \textit{Probl. Anal. Issues Anal.} \textbf{5(23)} (2016), 20--32.}








\vskip.05in
\bibitem{hm}
{\small R. Hern\'{a}ndez and M. J. Mart\'{i}n, Pre-Schwarzian and Schwarzian derivatives of harmonic mappings,
\textit{J. Geom. Anal.} \textbf{25} (2015), 64--91.}











\vskip.05in
\bibitem{ka}
{\small D. Kalaj, Muckenhoupt weights and Lindel\"{o}f theorem for harmonic mappings, \textit{Adv. Math.}
\textbf{280} (2015), 301--321.}

\vskip.05in
\bibitem{ka2}
{\small D. Kalaj, On Riesz type inequalities for harmonic mappings on the unit disk, \textit{Trans. Amer. Math. Soc.} \textbf{372} (2019), 4031--4051.}



 










\vskip.05in
\bibitem{ln}
{\small
J. Laitila, P. J. Nieminen, E. Saksman, and  H.-O. Tylli, Rigidity of composition operators on the Hardy space $H^p$, \textit{Adv. Math.} \textbf{319} (2017), 610--629.}

\vskip.05in
\bibitem{lp}
{\small
P. Li and S. Ponnusamy, On the coefficients estimate of $K$-quasiconformal harmonic mappings, 
\textit{Res. Math. Sci.} \textbf{13} (2026), Paper No. 16. }


\vskip.05in
\bibitem{lz}
{\small J. Liu and J.-F. Zhu, Riesz conjugate functions theorem for harmonic quasiconformal mappings, \textit{Adv. Math.} \textbf{434} (2023), Paper No. 109321, 27 pp.}









\vskip.05in
\bibitem{mb}
{\small P. Melentijevi\'{c} and V. Bo\v{z}in,  Sharp Riesz-Fej\'{e}r inequality for harmonic Hardy spaces, \textit{Potential Anal.} \textbf{54} (2021), 575--580.}

\vskip.05in
\bibitem{n}
{\small M. Nowak, Integral means of univalent harmonic maps, \textit{Ann. Univ. Mariae Curie-Skłodowska Sect. A} \textbf{50} (1996), 155--162.}

\vskip.05in
\bibitem{psz}
{\small D. Partyka, K. Sakan, and J.-F. Zhu, Quasiconformal harmonic mappings with the convex holomorphic part, \textit{Ann. Acad. Sci. Fenn. Math.}
\textbf{43} (2018), 401--418.}

\vskip.05in
\bibitem{pav}
{\small M. Pavlovi\'{c}, \textit{Function classes on the unit disc: an introduction}, De Gruyter Studies in Mathematics, Vol. 52, De Gruyter, Berlin, 2014.}




\vskip.05in
\bibitem{p}
{\small C. Pommerenke, Linear-invariante Familien analytischer Funktionen I, \textit{Math. Ann.} \textbf{155} (1964), 108--154.}


\vskip.05in
\bibitem{pqw}
{\small S. Ponnusamy, J. Qiao, and X. Wang, Uniformly locally univalent harmonic mappings, \textit{Proc. Indian Acad. Sci. Math. Sci.} \textbf{128} (2018), Paper No. 32, 14 pp.}







\vskip.05in
\bibitem{s}
{\small T. Sheil-Small, Constants for planar harmonic mappings, \textit{J. London Math.
Soc.} \textbf{42} (1990), 237--248.}






\vskip.05in
\bibitem{srj}
{\small Y. Sun, A. Rasila, and Y.-P. Jiang, Linear combinations of harmonic quasiconformal mappings convex
in one direction, \textit{Kodai Math. J.}  \textbf{39} (2016), 366--377.}


\vskip.05in
\bibitem{t}
{\small V. Todor\v{c}evi\'{c}, \textit{Harmonic quasiconformal mappings and hyperbolic type metrics}, Springer, Cham, 2019.}






\vskip.05in
\bibitem{wqr}
{\small Z.-G. Wang, J.-L. Qiu, and A. Rasila, On Koebe-type functions for harmonic quasiconformal mappings, to appear in \textit{Anal. Math.} 2026.}

\vskip.05in
\bibitem{wsj}
{\small Z.-G. Wang, L. Shi, and Y.-P. Jiang, On harmonic $K$-quasiconformal mappings associated with asymmetric vertical strips,
\textit{Acta Math. Sin. (Engl. Ser.)} \textbf{31} (2015), 1970--1976.}









\end{thebibliography}
\end{document}